\documentclass[11pt]{article}
\usepackage{amssymb}
\usepackage{amsmath}
\usepackage{amsthm}
\usepackage{amsfonts}
\usepackage{cite,color}
\usepackage{graphics}
\usepackage{graphicx}
\usepackage{epsfig}
\usepackage{epstopdf}
\usepackage{subfigure}
\usepackage[top=1.00 in,bottom=1.00 in,left=1.00 in,right=1.00 in]{geometry}
\usepackage{setspace}
\usepackage{float}
\usepackage{geometry}
\usepackage{caption}
\usepackage{mathrsfs}
\usepackage{subcaption} 
\usepackage{booktabs}
\usepackage{multirow}
\usepackage{siunitx}
\usepackage{enumerate}

\newtheorem{theorem}{Theorem}

\newtheorem{remark}{Remark}
\newtheorem{lemma}{Lemma}

\newtheorem{definition}{Definition}

\numberwithin{equation}{section}
\numberwithin{theorem}{section}
\numberwithin{definition}{section}
\numberwithin{lemma}{section}
\numberwithin{corollary}{section}
\numberwithin{remark}{section}
\numberwithin{example}{section}
\numberwithin{Claim}{section}

\newcommand\blfootnote[1]{
  \begingroup
  \renewcommand\thefootnote{}
  \footnote{#1}
  \addtocounter{footnote}{-1}
  \endgroup}

\title{\vspace{-10mm}Matrix-Free Stabilized BDF Schemes for Semilinear Parabolic Equations with Unconditional Maximum Bound Principle Preservation and Energy Stability}
\author{Haishen Dai \and Huan Lei \and Bin Zheng}
\date{} %

\begin{document}
\maketitle
\blfootnote{Haishen Dai, Department of Computational Mathematics, Science \& Engineering, Michigan State University, East Lansing, MI 48824, USA. \texttt{daihaish@msu.edu}.}
\blfootnote{Huan Lei, Department of Computational Mathematics, Science \& Engineering, Michigan State University, East Lansing, MI 48824, USA, and Department of Statistics \& Probability, Michigan State University, East Lansing, MI 48824, USA. \texttt{leihuan@msu.edu}.}
\blfootnote{Bin Zheng, School of Mathematics, Statistics and Mechanics, Beijing University of Technology, Beijing 100124, China. \texttt{zhengbin@bjut.edu.cn}.}

\date{}
\maketitle
\begin{abstract}
Abstract: We develop a family of stabilized backward differentiation formula (sBDF) schemes of orders one through four for semilinear parabolic equations. The proposed methods are designed to achieve three properties that are rarely available simultaneously in high-order time discretizations: unconditional preservation of the maximum bound principle (MBP), unconditional discrete energy stability, and practical matrix-free implementation. The construction integrates carefully designed stabilization terms, fixed-point iterations, and a pointwise cut-off strategy. The nonlinear algebraic systems arising from the implicit sBDF discretizations are solved by fixed-point iteration, resulting in fully matrix-free algorithms. This makes the approach particularly attractive for practical computations on general domains and under mixed boundary conditions, where FFT-based exponential time differencing methods are often unavailable or inefficient. We further present a unified analysis for the fully implemented schemes, explicitly incorporating the interplay among time discretization, nonlinear iteration, and cut-off. Unconditional contractivity of the fixed-point iterations and error estimates are established. For the Allen–Cahn equation, we additionally prove an unconditional discrete energy dissipation law. Numerical experiments confirm the theoretical convergence rates and demonstrate the robustness and efficiency of the proposed methods, particularly relative to ETD-based approaches for problems with mixed boundary conditions.
\\[2mm]
\textbf{Key words:}
semilinear parabolic equations; Allen–Cahn equation; maximum bound principle; stabilization; cutoff operator; energy stability; fixed-point iteration
\\[2mm]
\noindent\end {abstract}

\section{Introduction}
We consider numerical approximation of semilinear parabolic equations of the form
\begin{equation}\label{eq:stiff equation}
\phi_t = \alpha \Delta \phi + f(\phi), \qquad (\mathbf{x},t)\in \Omega \times (0,T),
\end{equation}
subject to suitable initial and boundary conditions, where $\alpha>0$ is the diffusion coefficient and $f(\phi)$ is a smooth nonlinear term. Problems of this type arise in a broad range of physical and biological applications. A prototypical example is the Allen--Cahn equation~\cite{Allen1979,Evans1992}, which plays a fundamental role in diffuse-interface descriptions of phase separation and has been widely used in materials science~\cite{Heida2012}, image processing~\cite{Benes2004,Lee2019}, and interfacial motion~\cite{Aihara2019}. Related semilinear reaction--diffusion systems also appear in biological applications such as prostate cancer growth modeling~\cite{Colli2020,Huang2024}.

Two structural features are central to the reliable simulation of such problems. The first is the maximum bound principle (MBP), which constrains the solution to remain within physically meaningful bounds determined by the initial and boundary data. The second is energy dissipation: many phase-field models possess an underlying energy functional that decreases monotonically in time. At the discrete level, the preservation of these properties is often critical for avoiding spurious oscillations, nonphysical solutions, and qualitatively incorrect long-time dynamics. However, standard time discretizations for parabolic PDEs~\cite{Thomee2008,Feng2003,Zhang2009} generally fail to preserve either the MBP or a compatible discrete energy law. As emphasized in~\cite{Ju2021}, loss of the MBP may lead to fundamentally incorrect numerical simulations. This makes the design of high-order time discretization methods that preserve both properties a challenging task.

A substantial literature has been devoted to structure-preserving discretizations. For MBP-preserving schemes, we refer to
\cite{Hou2017,Hou2020,Hou2020maxprinciple,Ju2021,Nan2022,Liu2014MPDG,Du2021,Fu2022,Liao2020BDF,Gong2023,Zhu2025}
and the references therein. For unconditionally energy-stable methods for gradient flows, representative approaches include convex-splitting schemes~\cite{Elliott1993,Eyre1998}, stabilized methods~\cite{Zhu1999,Shen2010}, and the IEQ/SAV frameworks~\cite{Yang2016homopolymer,Shen2018SAV}. Despite these developments, existing methods typically enforce MBP preservation and energy stability separately, or only under time-step restrictions, or at low temporal order. The simultaneous realization of \emph{high-order accuracy}, \emph{unconditional MBP preservation}, and \emph{unconditional discrete energy stability} remains rare. Achieving all three within a computationally efficient framework is even more difficult.

Among existing MBP-preserving approaches, the cut-off strategy introduced in~\cite{Lu2013Cutoff} is particularly attractive because of its simplicity and robustness. Building on this idea, Li et al.~\cite{Li2020Cutoff} proposed arbitrarily high-order exponential time differencing (ETD) schemes with cut-off for the Allen--Cahn equation, obtaining unconditional MBP preservation together with high-order temporal accuracy. ETD-type methods, including~\cite{Li2020Cutoff,Du2021MBP}, are especially effective on simple domains with periodic boundary conditions, where FFT-based diagonalization enables highly efficient matrix-free implementations. Their efficiency, however, deteriorates significantly on irregular domains or under mixed boundary conditions. In such settings, the discrete spatial operator typically lacks the structure required for FFT acceleration, and ETD methods must rely on large sparse-matrix representations together with the evaluation of matrix exponential-related operators via Krylov subspace approximations~\cite{Gallopoulos1992,Hochbruck1997,Hochbruck1998,Qiu2019} or Pad\'e-type approximations~\cite{Ehle1973,khaliq2009,Dai2023,Dai2026}. These procedures are effective but can become prohibitively expensive as the spatial resolution increases.

This paper addresses precisely this gap. Our goal is to construct high-order time discretizations for semilinear parabolic equations that simultaneously satisfy the following three requirements: unconditional MBP preservation, unconditional discrete energy stability, and practical matrix-free implementation on general domains and under mixed boundary conditions. To achieve this, we develop a family of stabilized backward differentiation formula schemes, denoted sBDF$k$ for $k=1,2,3,4$. The starting point is the fully implicit BDF discretization. We then introduce carefully designed stabilization terms and solve the resulting nonlinear algebraic systems by fixed-point iteration, thereby converting the implicit schemes into fully matrix-free algorithms. A simple pointwise cut-off is incorporated at the algorithmic level to enforce the MBP. The stabilization is designed so that the nominal temporal order of the underlying BDF formulas is retained, while the associated fixed-point mappings become unconditionally contractive. This contractive structure is a key ingredient in the analysis and in the practical efficiency of the methods.

The contribution of this work is twofold. First, we propose a family of high-order stabilized BDF schemes that combines unconditional MBP preservation and unconditional discrete energy stability within a single unified framework. To the best of our knowledge, such a combination is not available for high-order BDF-type discretizations in a fully matrix-free setting. Second, we develop a unified analysis at the level of the final implemented algorithms, rather than only for the formal implicit schemes. In particular, the contractivity of the fixed-point iteration, the accuracy of the discretization, the MBP-preserving property, and the energy dissipation law are analyzed in a framework that explicitly incorporates the interplay among time discretization, nonlinear iteration, and cut-off. For the Allen--Cahn equation, we prove that the practical after-cut-off iterates satisfy an unconditional discrete energy dissipation law. This algorithm-level analysis is essential, since it directly justifies the stability and robustness of the implemented methods.

The proposed approach offers a compelling alternative to ETD-based structure-preserving schemes when FFT-based diagonalization is unavailable or inefficient. Because the methods are matrix-free, they are straightforward to implement and attractive for large-scale computations. At the same time, they retain strong stability properties that are typically difficult to reconcile with high-order time discretization.

The remainder of this paper is organized as follows.
Section~\ref{sec:fully discrete} presents the fully discrete sBDF schemes and their unified formulation.
Section~\ref{sec:convergence of iteration} proves unconditional contractivity of the fixed-point iterations.
Section~\ref{sec:error_analysis} establishes the error analysis.
Section~\ref{sec:MBP} proves unconditional MBP preservation of the proposed algorithms.
Section~\ref{sec:energy} presents the discrete energy stability analysis for the Allen--Cahn equation.
Section~\ref{sec:numerical experiments} reports numerical experiments verifying the convergence rates and demonstrating the efficiency of the proposed matrix-free schemes, including comparisons with ETD-based methods under mixed boundary conditions.
Finally, Section~\ref{sec:conclusion} concludes the paper.

\section{Discretization and fixed-point iteration}\label{sec:fully discrete}

The nonlinear term $f$ in equation \eqref{eq:stiff equation} is assumed to satisfy the following standard conditions
(see, e.g., \cite{Du2021MBP}): there exist constants $\beta>0$ and $B>0$ such that
\begin{equation}\label{eq:f_assump}
|f'(\xi)|\le B,\qquad \forall\,\xi\in[-\beta,\beta],
\qquad
f(\beta)\le 0,\qquad f(-\beta)\ge 0.
\end{equation}
These assumptions are commonly used to ensure the validity of the maximum bound principle (MBP)
and are satisfied by a broad class of nonlinearities arising in phase-field models.

To facilitate the construction of numerical schemes that preserve the MBP unconditionally,
we first rewrite equation \eqref{eq:stiff equation} in the equivalent form
\begin{equation}\label{eq:modified_pde}
\begin{aligned}
\frac{\partial \phi}{\partial t} = \alpha \Delta \phi - B\phi + \mathcal N(\phi),
\end{aligned}
\end{equation}
where the modified nonlinear term is defined by
\begin{equation}\label{eq:N_def}
\mathcal N(\phi):=f(\phi)+B\phi .
\end{equation}
Under \eqref{eq:f_assump}, the mapping $\mathcal N$ is Lipschitz continuous and uniformly bounded
on $[-\beta,\beta]$:
\begin{equation}\label{eq:N_props}
|\mathcal N(\phi_1)-\mathcal N(\phi_2)|
\le 2B\,|\phi_1-\phi_2|,
\qquad \forall\,\phi_1,\phi_2\in[-\beta,\beta],
\end{equation}
and
\begin{equation}\label{eq:N_bound}
|\mathcal N(\phi)|\le B\beta,
\qquad \forall\,\phi\in[-\beta,\beta].
\end{equation}

Let $\|\cdot\|$ denote the discrete $\ell^2$-norm on the spatial grid, i.e.,
$\|v\|^2 := h^2\sum_{i,j} |v_{i,j}|^2$.
For any grid function $v$, denote by $(\Delta_h v)_{i,j}$ the standard five-point
finite-difference Laplacian. 

All numerical schemes developed in this paper are based on the reformulated system \eqref{eq:modified_pde}.
For simplicity, we set $\beta=1$ throughout. We discretize the spatial operator using a finite
difference method on a uniform Cartesian grid. In particular, the Laplacian $\Delta$ is approximated
by the second-order central difference operator $\Delta_h$, which leads to the semi-discrete system
of ordinary differential equations (ODEs)
\begin{equation}\label{eq:semi-discrete}
\frac{\partial \phi}{\partial t} = \alpha \Delta_h \phi - B \phi + \mathcal{N}(\phi).
\end{equation}

\subsection{sBDF1 scheme}\label{subsec:sBDF1}

We introduce the stabilized first-order backward differentiation formula  (sBDF1) to discretize \eqref{eq:semi-discrete} in time. Let $\widetilde{\phi}^n_{i,j}$ denote the approximation of the exact solution $\phi(x_i,y_j,t_n)$. The method is obtained by adding a stabilization term $B(\widetilde{\phi}_{i,j}^{n+1}-\widetilde{\phi}_{i,j}^{n})\Delta t$ to the standard backward Euler scheme, resulting in the following discrete problem:
\begin{equation}\label{eq:sBDF1_implicit}
\begin{aligned}
\Bigl(1+\frac{4\alpha\Delta t}{h^{2}}+2B\Delta t\Bigr)\widetilde{\phi}_{i,j}^{\,n+1}
&=
(1+B\Delta t)\widetilde{\phi}_{i,j}^{n} \\
&\quad
+\frac{\alpha\Delta t}{h^{2}}
\Bigl(
\widetilde{\phi}_{i+1,j}^{\,n+1}
+\widetilde{\phi}_{i-1,j}^{\,n+1}
+\widetilde{\phi}_{i,j-1}^{\,n+1}
+\widetilde{\phi}_{i,j+1}^{\,n+1}
\Bigr)
+\Delta t\,\mathcal N(\widetilde{\phi}_{i,j}^{\,n+1}).
\end{aligned}
\end{equation}

The nonlinear system \eqref{eq:sBDF1_implicit} is solved approximately by a fixed-point iteration, leading to a matrix-free implementation.
Starting from the initial guess $\widetilde{\phi}^{\,n+1,(0)}=\phi^n$, we define the auxiliary iterate
\begin{equation}\label{eq:fixed_point_iteration_aux}
\begin{aligned}
\widetilde{\phi}^{\,n+1,(m+1)}_{i,j}
=
\frac{1}{1+\frac{4\alpha\Delta t}{h^2}+2B\Delta t}
\Bigl[
&(1+B\Delta t)\phi^n_{i,j}
+\frac{\alpha\Delta t}{h^2}
\bigl(
\widetilde{\phi}^{\,n+1,(m)}_{i+1,j}
+\widetilde{\phi}^{\,n+1,(m)}_{i-1,j} \\
&\qquad
+\widetilde{\phi}^{\,n+1,(m)}_{i,j+1}
+\widetilde{\phi}^{\,n+1,(m)}_{i,j-1}
\bigr)
+\Delta t\,\mathcal N\bigl(\widetilde{\phi}^{\,n+1,(m)}_{i,j}\bigr)
\Bigr],
\qquad m=0,1,\dots 
\end{aligned}
\end{equation}

For the sBDF1 scheme, the resulting fixed-point mapping is monotone and preserves the discrete
maximum bound principle; hence no cut-off operator is needed. The iteration is terminated once
\[
\|\widetilde{\phi}^{\,n+1,(m+1)}-\widetilde{\phi}^{\,n+1,(m)}\|
< \varepsilon,\;\;
\text{where}\;\;
\varepsilon=C \min\{\Delta t^2, h^2\},
\]
and we define the numerical solution at time level $t_{n+1}$ as $\phi^{\,n+1} := \widetilde{\phi}^{\,n+1,(m_\star)}$ 
where $m_\star$ is the (finite) stopping index.

\subsection{sBDF2 scheme}

To obtain a second-order accurate temporal discretization, we 
start from the standard second-order backward differentiation formula, and introduce a stabilization term $2B\big(\widetilde{\phi}^{n+1}-2\widetilde{\phi}^{n}+\widetilde{\phi}^{n-1}\big)\Delta t$ where $\widetilde{\phi}^{\,n+1}$ denotes the solution of the stabilized BDF2 scheme (sBDF2):
\begin{equation}\label{eq:bdf2_stabilized}
\begin{aligned}
\left( 3 + \frac{8\alpha \Delta t}{h^2} + 4B \Delta t \right) \widetilde{\phi}^{\,n+1}_{i,j} 
=&\; (4 + 4B \Delta t) \widetilde{\phi}^{n}_{i,j} - (1 + 2B \Delta t) \widetilde{\phi}^{n-1}_{i,j} \\
&\; + \frac{2\alpha \Delta t}{h^2} \left(
\widetilde{\phi}^{\,n+1}_{i+1,j} + \widetilde{\phi}^{\,n+1}_{i-1,j}
+ \widetilde{\phi}^{\,n+1}_{i,j-1} + \widetilde{\phi}^{\,n+1}_{i,j+1}
\right)
+ 2 \Delta t \, \mathcal{N}(\widetilde{\phi}^{\,n+1}_{i,j}).
\end{aligned}
\end{equation}

The nonlinear system \eqref{eq:bdf2_stabilized} is solved by a fixed-point iteration.
Starting from $\widetilde{\phi}^{n+1, (0)} = \phi^n$, for $m=0,1,\dots$, we first compute the auxiliary iterate
$\widetilde{\phi}^{\,n+1,(m+1)}$:

\begin{equation}\label{eq:bdf2_fixed_point}
\begin{aligned}
\widetilde{\phi}^{\,n+1,(m+1)}_{i,j}
=\;& \frac{1}{\,3+\frac{8\alpha\Delta t}{h^2}+4B\Delta t\,}
\Bigl[
(4+4B\Delta t)\phi^{n}_{i,j}
-(1+2B\Delta t)\phi^{n-1}_{i,j} \\[-2pt]
&\qquad
+\frac{2\alpha\Delta t}{h^2}\Bigl(
\phi^{n+1,(m)}_{i+1,j}
+\phi^{n+1,(m)}_{i-1,j}
+\phi^{n+1,(m)}_{i,j-1}
+\phi^{n+1,(m)}_{i,j+1}
\Bigr)
+2\Delta t\,\mathcal N\!\bigl(\phi^{n+1,(m)}_{i,j}\bigr)
\Bigr].
\end{aligned}
\end{equation}

The $k$th-order BDF discretizations do not preserve the maximum bound principle in general for $k\geq 2$. 
We therefore apply a pointwise cut-off operator $\Pi_{[-1,1]}$ after each fixed-point update 
and define the algorithmic iterate
\begin{equation}
\phi^{n+1,(m+1)}_{i,j}
=\; \Pi_{[-1,1]}\!\left(\widetilde{\phi}^{\,n+1,(m+1)}_{i,j}\right)
= \max\!\left\{
\min\!\left\{\widetilde{\phi}^{\,n+1,(m+1)}_{i,j},\,1\right\},\, -1
\right\}.
\end{equation}

The fixed point iteration is terminated once
$$
\|\phi^{n+1,(m+1)}-\phi^{n+1,(m)}\|<\varepsilon,\;\;
\text{where}\;\;
\varepsilon=C \min\{{\Delta t}^3, h^2\},
$$
and the numerical solution at time level 
$t_{n+1}$ is defined by $\phi^{n+1}:=\phi^{\,n+1,(m_\ast)}$.

\subsection{sBDF3 scheme}

To obtain a third-order accurate temporal discretization, we start from the standard
third-order BDF and introduce a stabilization term
$6B(\widetilde{\phi}^{n+1}-3\widetilde{\phi}^n+3\widetilde{\phi}^{n-1}-\widetilde{\phi}^{n-2})\Delta t$,
where $\widetilde{\phi}^{\,n+1}$ denotes the solution of the stabilized BDF3 scheme (sBDF3):
\begin{equation}\label{eq:sBDF3}
\begin{aligned}
\Bigl(11+\frac{24\alpha\Delta t}{h^2}+12B\Delta t\Bigr)\widetilde{\phi}_{i,j}^{\,n+1}
&=
(18+18B\Delta t)\widetilde{\phi}_{i,j}^{n}
-(9+18B\Delta t)\widetilde{\phi}_{i,j}^{n-1}
+(2+6B\Delta t)\widetilde{\phi}_{i,j}^{n-2} \\
&\quad
+\frac{6\alpha\Delta t}{h^2}
\Bigl(
\widetilde{\phi}_{i+1,j}^{\,n+1}
+\widetilde{\phi}_{i-1,j}^{\,n+1}
+\widetilde{\phi}_{i,j-1}^{\,n+1}
+\widetilde{\phi}_{i,j+1}^{\,n+1}
\Bigr)
+6\Delta t\,\mathcal N(\widetilde{\phi}_{ij}^{\,n+1}).
\end{aligned}
\end{equation}

The nonlinear system \eqref{eq:sBDF3} is solved by a fixed-point iteration.
Starting from $\widetilde{\phi}^{\,n+1,(0)}=\phi^n$, for $m=0,1,\dots$, we first compute the
auxiliary iterate
\begin{equation}\label{eq:sBDF3_FPI}
\begin{aligned}
\widetilde{\phi}_{i,j}^{\,n+1,(m+1)}
=
&\frac{1}{11+\frac{24\alpha\Delta t}{h^2}+12B\Delta t}
\Bigl[
(18+18B\Delta t)\phi_{i,j}^{n}
-(9+18B\Delta t)\phi_{i,j}^{n-1}
+(2+6B\Delta t)\phi_{i,j}^{n-2} \\
&\quad
+\frac{6\alpha\Delta t}{h^2}
\Bigl(
\phi_{i+1,j}^{\,n+1,(m)}
+\phi_{i-1,j}^{\,n+1,(m)}
+\phi_{i,j-1}^{\,n+1,(m)}
+\phi_{i,j+1}^{\,n+1,(m)}
\Bigr)
+6\Delta t\,\mathcal N(\phi_{ij}^{\,n+1,(m)})
\Bigr].
\end{aligned}
\end{equation}

We apply a pointwise cut-off operator $\Pi_{[-1,1]}$ after each
fixed-point update and define the algorithmic iterate
\begin{equation}\label{eq:sBDF3_cutoff}
\phi_{i,j}^{\,n+1,(m+1)}
=
\Pi_{[-1,1]}\!\left(\widetilde{\phi}_{i,j}^{\,n+1,(m+1)}\right).
\end{equation}

The fixed-point iteration is terminated once
$$
\|\phi^{\,n+1,(m+1)}-\phi^{\,n+1,(m)}\|<\varepsilon\;\;
\text{where}\;\;
\varepsilon=C\min\{\Delta t^4,h^2\},
$$ 
and the numerical solution at time level $t_{n+1}$ is defined by
$
\phi^{n+1}:=\phi^{\,n+1,(m_\ast)},
$
where $m_\ast$ is the stopping index.

\subsection{sBDF4 scheme}\label{subsec:sBDF4}

Similarly, we start from the standard
fourth-order BDF and introduce a stabilization term
$12B(\widetilde{\phi}^{n+1}-4\widetilde{\phi}^n+6\widetilde{\phi}^{n-1}-4\widetilde{\phi}^{n-2}+\widetilde{\phi}^{n-3})\Delta t$,
to obtain the stabilized BDF4 scheme (sBDF4):
\begin{equation}\label{eq:sBDF4}
\begin{aligned}
\Bigl(25+ &\frac{48\alpha\Delta t}{h^2}+24B\Delta t\Bigr)\widetilde{\phi}^{\,n+1}_{i,j}
=
(48+48B\Delta t)\widetilde{\phi}^{n}_{i,j}
-(36+72B\Delta t)\widetilde{\phi}^{n-1}_{i,j} +(16+48B\Delta t)\widetilde{\phi}^{n-2}_{i,j} \\
&\quad
-(3+12B\Delta t)\widetilde{\phi}^{n-3}_{i,j} 
+\frac{12\alpha\Delta t}{h^2}
\Bigl(
\widetilde{\phi}^{\,n+1}_{i+1,j}
+\widetilde{\phi}^{\,n+1}_{i-1,j}
+\widetilde{\phi}^{\,n+1}_{i,j-1}
+\widetilde{\phi}^{\,n+1}_{i,j+1}
\Bigr)
+12\Delta t\,\mathcal N(\widetilde{\phi}^{\,n+1}_{ij}).
\end{aligned}
\end{equation}

The nonlinear system \eqref{eq:sBDF4} is solved by a fixed-point iteration.
Starting from $\widetilde{\phi}^{\,n+1,(0)}=\phi^n$, for $m=0,1,\dots$, we first compute the
auxiliary iterate
\begin{multline}\label{eq:sBDF4_FPI}
\widetilde{\phi}^{\,n+1,(m+1)}_{i,j}
=\frac{1}{25+\frac{48\alpha\Delta t}{h^2}+24B\Delta t}
\Bigl[
(48+48B\Delta t)\phi^{n}_{i,j}
-(36+72B\Delta t)\phi^{n-1}_{i,j} +(16+48B\Delta t)\phi^{n-2}_{i,j}\\
-(3+12B\Delta t)\phi^{n-3}_{i,j} 
+\frac{12\alpha\Delta t}{h^2}
\Bigl(
\phi^{\,n+1,(m)}_{i+1,j}
+\phi^{\,n+1,(m)}_{i-1,j}
+\phi^{\,n+1,(m)}_{i,j-1}
+\phi^{\,n+1,(m)}_{i,j+1}
\Bigr)
+12\Delta t\,\mathcal N(\phi^{\,n+1,(m)}_{ij})
\Bigr].
\end{multline}

We apply a pointwise cut-off operator $\Pi_{[-1,1]}$ after each
fixed-point update and define the algorithmic iterate
\begin{equation}\label{eq:sBDF4_cutoff}
\phi^{\,n+1,(m+1)}_{ij}
=
\Pi_{[-1,1]}\!\left(\tilde{\phi}^{\,n+1,(m+1)}_{i,j}\right).
\end{equation}

The fixed-point iteration is terminated once
$$
\|\phi^{\,n+1,(m+1)}-\phi^{\,n+1,(m)}\|<\varepsilon\;\;
\text{where}\;\;
\varepsilon=C\min\{\Delta t^5,h^2\},
$$ 
and the numerical solution at time level $t_{n+1}$ is defined by $\phi^{n+1}:=\phi^{\,n+1,(m_\ast)}$ where $m_\ast$ is the (finite) stopping index.

\begin{remark}
For all stabilized BDF schemes considered in this work, the stabilization term
can be written in the unified form
\[
\beta_k B\,\Delta t\,\nabla^k \phi^{n+1},
\qquad
\nabla^k \phi^{n+1}
=
\sum_{j=0}^{k}(-1)^j\binom{k}{j}\phi^{n+1-j},
\]
where $\nabla^k$ denotes the $k$-th order backward finite difference operator, 
$\beta_k>0$ is a scheme-dependent constant chosen to ensure unconditional
contractivity of the associated fixed-point iteration and unconditional discrete
energy stability. In particular, we take
$\beta_1=1$, $\beta_2=2$, $\beta_3=6$, and $\beta_4=12$.

Since $\nabla^k \phi^{n+1}=\mathcal O(\Delta t^k)$ for sufficiently smooth solutions,
the stabilization term is of order $\mathcal O(\Delta t^{k+1})$ and therefore does not
affect the $k$-th order temporal accuracy of the underlying BDF scheme.
\end{remark}

We emphasize that the stabilization term is independent of the cut-off operator used
to enforce the discrete maximum bound principle. In particular, for the first-order
scheme (sBDF1), the stabilized discretization itself preserves the maximum bound
principle, while for higher-order schemes (sBDF$k$, $k\ge2$), a pointwise cut-off
operator is incorporated to enforce unconditional MBP.

\subsection{Unified formulation for sBDF$k$ schemes}\label{subsec:sBDFk}

Let $D_k\phi(t_{n+1})$ denote the standard BDF$k$ approximation of $\partial_t\phi(t_{n+1})$:
\[
D_k\phi(t_{n+1})
:= \frac{1}{\Delta t}\sum_{\ell=0}^{k} a_{\ell,k}\phi(t_{n+1-\ell}),
\]
where $\{a_{\ell,k}\}_{l=0}^k$ are the BDF$k$ coefficients. In particular, when $k=1$, 
$a_{0,1}=1$, $a_{1,1}=-1$;
when $k=2$, $a_{0, 2}=3/2$, $a_{1, 2}=-2$, $a_{2,2}=1/2$;
when $k=3$, $a_{0, 3}=11/6$, $a_{1, 3}=-3$, $a_{2,3}=3/2$, $a_{3,3}=-1/3$;
when $k=4$, $a_{0, 4}=25/12$, $a_{1, 4}=-4$, $a_{2,4}=3$, $a_{3,4}=-4/3$, $a_{4,4}=1/4$.

With this notation, the implicit sBDF$k$ schemes can be expressed in the following unified form:
\begin{multline}
\left(
\beta_k a_{0, k} + \frac{4\beta_k \alpha\Delta t}{h^2}+2\beta_k B\Delta t
\right)
\widetilde{\phi}^{n+1}_{i,j}=\sum_{l=1}^k \left(
-\beta_k a_{l,k} - \beta_k (-1)^l \binom{k}{l} B\Delta t
\right) \widetilde{\phi}^{n+1-l}_{i,j}\\
+\frac{\beta_k \alpha \Delta t }{h^2}\left(
\widetilde{\phi}^{n+1}_{i-1,j}+\widetilde{\phi}^{n+1}_{i+1,j}
+\widetilde{\phi}^{n+1}_{i,j-1}+\widetilde{\phi}^{n+1}_{i,j+1}
\right)
+\beta_k \Delta t \mathcal{N}(\widetilde{\phi}^{n+1}_{i,j}).
\end{multline}

Moreover, the auxiliary iterate generated by the fixed-point update admits the unified representation:
\begin{multline}
\widetilde{\phi}_{i,j}^{n+1, (m+1)}
=\frac{1}{\beta_k a_{0, k} + \frac{4\beta_k \alpha\Delta t}{h^2}+2\beta_k B\Delta t}
\sum_{l=1}^k \left(
-\beta_k a_{l,k} - \beta_k (-1)^l \binom{k}{l} B\Delta t
\right) \phi^{n+1-l}_{i,j} \\
+\frac{\beta_k \alpha \Delta t }{h^2}\left(
\phi^{n+1,(m)}_{i-1,j}+\phi^{n+1,(m)}_{i+1,j}
+\phi^{n+1,(m)}_{i,j-1}+\phi^{n+1,(m)}_{i,j+1}
\right)
+\beta_k \Delta t \mathcal{N}(\phi^{n+1,(m)}_{i,j}).
\end{multline}

\section{Contractivity of the fixed-point iteration}\label{sec:convergence of iteration}

In this section, we establish unconditional contractivity of the fixed-point
iteration associated with the stabilized BDF schemes introduced in Section~2.
The analysis is carried out in a unified framework and applies to the sBDF$k$
schemes with $k=1,2,3,4$.

\begin{theorem}[Unconditional contractivity of the fixed-point iterations]
\label{thm:unified_contractivity}
Let $\{\phi^{n+1,(m)}\}_{m\ge0}$ be the algorithmic iterates generated by the sBDF$k$
schemes in Section~2, where $k\in\{1,2,3,4\}$.
Let $\widetilde{\phi}^{n+1,(m)}$ denote the corresponding auxiliary iterates produced by the
fixed-point update (before applying the cut-off when $k\ge2$).

Assume that the nonlinear term $\mathcal N(\cdot)$ is Lipschitz continuous on $[-1,1]$:
\begin{equation}\label{eq:N_Lipschitz}
|\mathcal N(u)-\mathcal N(v)|\le 2B\,|u-v|,\qquad \forall\,u,v\in[-1,1].
\end{equation}

Then the fixed-point iterations are contractive in the discrete $\ell^2$-norm:
\begin{equation}\label{eq:contractive_unified}
\|\phi^{n+1,(m+1)}-\phi^{n+1,(m)}\|
\le \rho_k\,\|\phi^{n+1,(m)}-\phi^{n+1,(m-1)}\|,
\qquad m\ge1,
\end{equation}
where the contraction factor satisfies $0<\rho_k<1$ for any $\Delta t>0$ and $h>0$, with
\begin{equation}\label{eq:rho_k}
\rho_k=\frac{A_k}{\beta_k a_{0,k}+A_k},\qquad
A_k:=\frac{4\beta_k\alpha\Delta t}{h^2} + 2\beta_k B\,\Delta t.
\end{equation}
Here $(a_{0,k},\beta_k)$ are given by $(a_{0,1},\beta_1)=(1,1)$, $(a_{0,2},\beta_2)=(3/2,2)$, 
$(a_{0,3},\beta_3)=(11/6,6)$, $(a_{0,4},\beta_4)=(25/12,12)$.
In particular, the fixed-point iterations converge unconditionally for all $\Delta t>0$ and $h>0$.
\end{theorem}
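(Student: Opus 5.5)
The plan is to compare two consecutive fixed-point updates directly. In the unified representation of Section~\ref{subsec:sBDFk}, the history terms $\phi^{n+1-l}$, $l=1,\dots,k$, are identical for every $m$, so they cancel when we subtract two consecutive updates. Write $D_k:=\beta_k a_{0,k}+A_k$ for the common denominator and $e^{(m)}:=\phi^{n+1,(m)}-\phi^{n+1,(m-1)}$. Subtracting the update for $m$ from the update for $m+1$ gives, pointwise,
\begin{equation*}
\widetilde{\phi}^{n+1,(m+1)}_{i,j}-\widetilde{\phi}^{n+1,(m)}_{i,j}
=\frac{1}{D_k}\Bigl[\frac{\beta_k\alpha\Delta t}{h^2}\bigl(e^{(m)}_{i-1,j}+e^{(m)}_{i+1,j}+e^{(m)}_{i,j-1}+e^{(m)}_{i,j+1}\bigr)
+\beta_k\Delta t\bigl(\mathcal N(\phi^{n+1,(m)}_{i,j})-\mathcal N(\phi^{n+1,(m-1)}_{i,j})\bigr)\Bigr].
\end{equation*}

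Next I would take the discrete $\ell^2$-norm of this identity and apply the triangle inequality.

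\emph{Neighbor terms.} Each shifted grid function $e^{(m)}_{i\pm1,j}$, $e^{(m)}_{i,j\pm1}$ has $\ell^2$-norm at most $\|e^{(m)}\|$. This holds with equality for periodic boundary conditions. For Dirichlet data, the boundary values are fixed and do not change between iterates, so the ghost differences vanish. Homogeneous Neumann reflections need a short check; I expect this boundary bookkeeping to be the only technical point in this step. Together the four neighbor terms contribute at most $\frac{4\beta_k\alpha\Delta t}{h^2}\|e^{(m)}\|$.

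\emph{Nonlinear term.} Assumption \eqref{eq:N_Lipschitz} gives the bound $2\beta_k B\Delta t\,\|e^{(m)}\|$, provided both $\phi^{n+1,(m)}$ and $\phi^{n+1,(m-1)}$ take values in $[-1,1]$.

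Combining the two contributions yields
\begin{equation*}
\|\widetilde{\phi}^{n+1,(m+1)}-\widetilde{\phi}^{n+1,(m)}\|\le \frac{A_k}{D_k}\|e^{(m)}\|=\rho_k\|e^{(m)}\|.
\end{equation*}

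To pass from the auxiliary iterates to the algorithmic iterates when $k\ge2$, I would use that the cut-off $\Pi_{[-1,1]}$ is $1$-Lipschitz pointwise: $|\Pi a-\Pi b|\le|a-b|$. Hence $\|\phi^{n+1,(m+1)}-\phi^{n+1,(m)}\|\le\|\widetilde{\phi}^{n+1,(m+1)}-\widetilde{\phi}^{n+1,(m)}\|$, and \eqref{eq:contractive_unified} follows. The cut-off also guarantees that all iterates lie in $[-1,1]$, which justifies using \eqref{eq:N_Lipschitz}. We need $m\ge1$ so that both arguments are genuine iterates; $\phi^{n+1,(0)}=\phi^n\in[-1,1]$ by induction in $n$. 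The bound $0<\rho_k<1$ is immediate, since $A_k>0$ and $\beta_k a_{0,k}>0$.

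The main obstacle is the case $k=1$, where there is no cut-off and we must show separately that the iterates remain in $[-1,1]$. I would argue by induction on $m$, using monotonicity of the update. Since $f'\ge -B$ on $[-1,1]$, the function $\mathcal N$ is nondecreasing there. The update is therefore a nondecreasing function of every argument: all coefficients are positive, and $\mathcal N$ enters with a positive sign. If all inputs lie in $[-1,1]$, we can bound the output above by inserting the value $1$ everywhere. This gives $\bigl(1+B\Delta t+\frac{4\alpha\Delta t}{h^2}+\Delta t(f(1)+B)\bigr)/D_1\le 1$, because $f(1)\le0$. The symmetric argument with $f(-1)\ge0$ gives the lower bound $-1$. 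Once the iterates are known to stay in $[-1,1]$, the argument above applies verbatim with $\Pi$ replaced by the identity, and unconditional convergence follows from the Banach fixed-point theorem.
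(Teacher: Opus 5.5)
Your proposal follows the paper's proof: cancel the history terms by subtracting consecutive updates, bound the neighbor sum and the nonlinear difference by $\frac{4\beta_k\alpha\Delta t}{h^2}$ and $2\beta_k B\Delta t$ times $\|\phi^{n+1,(m)}-\phi^{n+1,(m-1)}\|$, and pass to the algorithmic iterates using that $\Pi_{[-1,1]}$ is non-expansive; your monotonicity argument keeping the $k=1$ iterates in $[-1,1]$ is correct, and it makes explicit something the paper's proof tacitly takes from Theorem~\ref{thm:MBP_sBDF1}, where it is shown via $|\mathcal N|\le B$. Neither you nor the paper actually settles the Neumann ``short check'', and the shift-by-shift $\ell^2$ bound does fail there in the unweighted norm, because a reflected ghost value duplicates an interior node while the far-end value drops out, so you need a different route: pair opposite shifts (this works for cell-centered ghosts), use a boundary-weighted $\ell^2$ norm in which the node-based neighbor sum is self-adjoint with norm at most $4$, or argue in $\ell^\infty$, where the neighbor weights in each row sum to at most $\frac{4\beta_k\alpha\Delta t}{h^2}$ for any boundary condition.
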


\begin{proof}
We write the fixed-point update for sBDF$k$ in the unified form
\begin{equation}\label{eq:FP_unified}
\widetilde{\phi}^{n+1,(m+1)}
=
\frac{1}{\beta_k a_{0,k}+A_k}
\Big(
\mathcal H_k (\phi^n, \phi^{n-1},\dots, \phi^{n-k+1})
+\mathcal L_k \phi^{n+1,(m)}
+\beta_k\Delta t\,\mathcal N(\phi^{n+1,(m)})
\Big),
\end{equation}
where $\mathcal H_k$ collects all history terms independent of the iteration index $m$,
and $\mathcal L_k$ denotes the neighbor-sum part of the diffusion operator $\Delta_h$.
By construction, the coefficient in front of the neighbor sum equals
$\frac{4\beta_k\alpha\Delta t}{h^2}$.

Subtracting the $m$-th update from the $(m+1)$-th update in \eqref{eq:FP_unified},
the history terms cancel since $\mathcal H_k$ is independent of $m$, yielding
\begin{equation}\label{eq:diff_unified}
\widetilde{\phi}^{n+1,(m+1)}-\widetilde{\phi}^{n+1,(m)}
=
\frac{1}{\beta_k a_{0,k}+A_k}
\Big(
\mathcal L_k(\phi^{n+1,(m)}-\phi^{n+1,(m-1)})
+\beta_k\Delta t\big[\mathcal N(\phi^{n+1,(m)})-\mathcal N(\phi^{n+1,(m-1)})\big]
\Big).
\end{equation}
Taking the discrete $\ell^2$-norm on both sides, using the triangle inequality, and
noting that $\mathcal L_k$ consists of a weighted sum of the four nearest neighbors,
we obtain
\[
\|\widetilde{\phi}^{n+1,(m+1)}-\widetilde{\phi}^{n+1,(m)}\|
\le
\frac{\frac{4\beta_k\alpha\Delta t}{h^2}}{\beta_k a_{0,k}+A_k}
\|\phi^{n+1,(m)}-\phi^{n+1,(m-1)}\|
+
\frac{2\beta_k B\Delta t}{\beta_k a_{0,k}+A_k}
\|\phi^{n+1,(m)}-\phi^{n+1,(m-1)}\|.
\]
Therefore,
\begin{equation}\label{eq:tilde_contr}
\|\widetilde{\phi}^{n+1,(m+1)}-\widetilde{\phi}^{n+1,(m)}\|
\le
\frac{A_k}{\beta_k a_{0,k}+A_k}
\|\phi^{n+1,(m)}-\phi^{n+1,(m-1)}\|
= \rho_k\,\|\phi^{n+1,(m)}-\phi^{n+1,(m-1)}\|.
\end{equation}

For $k=1$, we have $\phi^{n+1,(m)}=\widetilde{\phi}^{n+1,(m)}$, so \eqref{eq:contractive_unified}
follows directly from \eqref{eq:tilde_contr}.
For $k\ge2$, the algorithmic iterates are obtained by the pointwise cut-off
$\phi^{n+1,(m)}=\Pi_{[-1,1]}(\widetilde{\phi}^{n+1,(m)})$.
Since $\Pi_{[-1,1]}$ is non-expansive, we have
\[
\|\phi^{n+1,(m+1)}-\phi^{n+1,(m)}\|
\le
\|\widetilde{\phi}^{n+1,(m+1)}-\widetilde{\phi}^{n+1,(m)}\|.
\]
Combining this with \eqref{eq:tilde_contr} yields \eqref{eq:contractive_unified}.
Finally, since $a_{0,k}>0$ and $A_k>0$ for any $\Delta t>0$ and $h>0$, we have
$0<\rho_k=\frac{A_k}{a_{0,k}+A_k}<1$, proving unconditional contractivity.
\end{proof}

\begin{remark}
The stabilization term plays a crucial role in establishing unconditional
contractivity of the fixed-point iteration.
Without stabilization, the contraction factor would depend on $\Delta t$ and
would generally require a restrictive time-step condition.
The inclusion of the stabilization term enlarges the diagonal dominance of the
iteration operator and yields a contraction factor bounded away from one,
uniformly in $\Delta t$ and $h$.
\end{remark}

\begin{remark}
For the sBDF1 scheme, the fixed-point mapping is monotone and preserves the
maximum bound principle automatically; hence no cut-off operator is required.
For higher-order schemes ($k\ge2$), although the fixed-point iteration remains
contractive due to stabilization, the maximum bound principle is not preserved
by the underlying BDF discretization.
A pointwise cut-off operator is therefore incorporated at the algorithmic level
to enforce unconditional MBP, without affecting the contractivity or the
convergence of the iteration.
\end{remark}

\section{Error analysis}\label{sec:error_analysis}

In this section, we establish the convergence of the proposed sBDF$k$ algorithms
for $k=1,2,3,4$.
The analysis is carried out in a unified framework and is based on the unconditional
contractivity of the fixed-point iteration proved in
Theorem~\ref{thm:unified_contractivity}.

Throughout this section, $\phi(\cdot,t)$ denotes the exact solution of
\eqref{eq:semi-discrete}.
Let $\phi^n$ be the numerical solution produced by the sBDF$k$ algorithm at time level
$t_n$.
We further denote by $\widetilde{\phi}^{\,n+1}$ the exact solution of the implicit
stabilized BDF$k$ scheme at time $t_{n+1}$, i.e., the fixed point of the contractive
mapping associated with the iteration.

Define the errors
\[
e^{n+1} := \phi^{n+1} - \phi(\cdot,t_{n+1}),
\qquad
\widetilde e^{\,n+1} := \widetilde{\phi}^{\,n+1}-\phi(\cdot,t_{n+1}).
\]


We assume that the exact solution satisfies the maximum bound
$\phi(\cdot,t)\in[-1,1]$ for $t\in[0,T]$.


\begin{lemma}[Local truncation error]\label{lem:LTE}
Assume that $\phi \in C^{k+1}\big([0,T];C^{4}(\Omega)\big)$ ($k=1,2,3,4$).
Let $\tau^{n+1}$ be the residual obtained by inserting the exact solution
$\phi(\cdot,t_{n+1}),\phi(\cdot,t_n),\dots$ into the implicit stabilized BDF$k$
scheme. Then there exists a constant $C>0$, independent of $\Delta t$ and $h$, such that
\[
\|\tau^{n+1}\| \le C(\Delta t^{\,k}+h^2).
\]
\end{lemma}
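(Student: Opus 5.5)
The first step is to normalize the residual. Divide the unified implicit sBDF$k$ scheme by $\beta_k\Delta t$, so that it reads
\[
D_k\widetilde\phi^{n+1} + B\,\nabla^k\widetilde\phi^{n+1} = \alpha\Delta_h\widetilde\phi^{n+1} - B\widetilde\phi^{n+1} + \mathcal N(\widetilde\phi^{n+1}).
\]
To check this, note that the coefficient $2\beta_k B\Delta t$ on $\widetilde\phi^{n+1}_{i,j}$ splits into $\beta_k B\Delta t$, coming from $\nabla^k$, plus $\beta_k B\Delta t$, coming from $-B\phi$. The $4\beta_k\alpha\Delta t/h^2$ diagonal term combines with the neighbor sum to give $\Delta_h$. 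We then define $\tau^{n+1}$ as the defect of this normalized equation when $\phi(\cdot,t_{n+1-\ell})$ is inserted. If one prefers the unnormalized form, the residual simply picks up a factor $\beta_k\Delta t$, which only improves the bound. Since $\phi$ is the exact solution of the PDE restricted to the grid, we have $\phi_t = \alpha\Delta\phi - B\phi + \mathcal N(\phi)$ at $t_{n+1}$. This gives the decomposition
\[
\tau^{n+1} = \bigl(D_k\phi(t_{n+1}) - \phi_t(t_{n+1})\bigr) + B\,\nabla^k\phi(t_{n+1}) + \alpha\bigl(\Delta - \Delta_h\bigr)\phi(t_{n+1}).
\]
If $\phi$ is instead read as the semi-discrete solution, the last term is absent and the argument only becomes simpler. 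Note that $\mathcal N$ and $-B\phi$ cancel exactly, so no Lipschitz property of $\mathcal N$ is needed here.

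Each of the three pieces is bounded pointwise and then in the discrete $\ell^2$ norm.

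\textbf{BDF consistency term.} Taylor-expand $\phi(t_{n+1-\ell})$ about $t_{n+1}$ with integral remainder of order $k+1$. The BDF$k$ coefficients satisfy $\sum_\ell a_{\ell,k}=0$, $\sum_\ell a_{\ell,k}(-\ell)=1$, and $\sum_\ell a_{\ell,k}\ell^j=0$ for $2\le j\le k$. For each $k\le4$ these moment conditions can be verified directly from the listed coefficients. Consequently $|D_k\phi-\phi_t|\le C\Delta t^k\max_t|\partial_t^{k+1}\phi|$ pointwise.

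\textbf{Stabilization term.} Represent $\nabla^k\phi(t_{n+1})$ through the $B$-spline/Peano kernel (iterated integral) formula. This gives $|\nabla^k\phi|\le \Delta t^k\max|\partial_t^k\phi|$, so this term is also $O(\Delta t^k)$ after the normalization above. This is consistent with the remark that the stabilization contributes $O(\Delta t^{k+1})$ to the unnormalized scheme.

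\textbf{Spatial term.} The standard five-point Taylor argument gives $|(\Delta-\Delta_h)\phi|\le \tfrac{h^2}{12}\bigl(\|\partial_x^4\phi\|_\infty+\|\partial_y^4\phi\|_\infty\bigr)$, using $\phi\in C^4(\Omega)$ uniformly in $t$.

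All three bounds are uniform over grid points. Passing to the norm uses $\|v\|\le|\Omega|^{1/2}\max_{i,j}|v_{i,j}|$, since $h^2\sum_{i,j}1\approx|\Omega|$. This yields $\|\tau^{n+1}\|\le C(\Delta t^k+h^2)$, where $C$ depends on $\alpha$, $B$, $|\Omega|$ and on norms of $\phi$ in $C^{k+1}([0,T];C^4)$. The constant is independent of $\Delta t$ and $h$.

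The main obstacle is bookkeeping rather than analysis. First, the normalization must be fixed so that the claimed order is $\Delta t^k$ and not $\Delta t^{k+1}$; the statement's $C(\Delta t^k+h^2)$ matches the normalization by $\beta_k\Delta t$. Second, the split of the $2\beta_k B\Delta t$ diagonal coefficient between the stabilization and the $-B\phi$ term must be checked carefully, using the explicit listed schemes for $k=1,\dots,4$. Third, the boundary nodes need care under mixed boundary conditions: there the ghost-point or one-sided treatment must still yield $O(h^2)$ consistency. I would handle this by assuming interior/periodic-type consistency, as the uniform grid setting implicitly does.
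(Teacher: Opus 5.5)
Your proof is correct and follows the same route as the paper: the BDF$k$ order conditions give $D_k\phi-\phi_t=O(\Delta t^k)$, the stabilization term is bounded by $O(\Delta t^k)$, and the five-point Laplacian contributes $O(h^2)$, all combined in the discrete $\ell^2$ norm. Your explicit normalization by $\beta_k\Delta t$ and your identification of the stabilization as exactly $B\nabla^k\phi$ make precise what the paper's sketch leaves implicit; the paper says only ``a linear combination of backward differences up to order $k$''.
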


\begin{proof}
We outline the consistency estimate in both time and space.

By Taylor expansion of $\phi(t_{n+1-\ell})$ around $t_{n+1}$, we have
\[
\phi(t_{n+1-\ell})
= \sum_{r=0}^{k}\frac{(-\ell\Delta t)^r}{r!}\,\partial_t^r\phi(t_{n+1})
+ O(\Delta t^{k+1}),
\]
and using the order conditions of BDF$k$ yields
\[
D_k\phi(t_{n+1}) = \partial_t\phi(t_{n+1}) + O(\Delta t^{k}).
\]
The stabilization term in the sBDF$k$ scheme is a linear combination of backward
differences up to order $k$, hence it is also $O(\Delta t^{k})$ when evaluated at the
exact solution. Therefore the total temporal truncation error is $O(\Delta t^k)$.

Next, we consider the spatial discretization error. 
Let $\Delta_h$ be the five-point Laplacian. For $\phi(\cdot,t_{n+1})\in C^{4}(\Omega)$,
a standard Taylor expansion gives
\[
(\Delta_h\phi)_{ij} = (\Delta\phi)(x_{ij},t_{n+1}) + O(h^2),
\]
uniformly on interior grid points. The same estimate holds in the discrete $\ell^2$-norm.

Putting together, and noting that the remaining lower-order terms are
evaluated consistently at $t_{n+1}$, we obtain
\[
\|\tau^{n+1}\|\le C(\Delta t^{k}+h^2),
\]
where $C$ depends on $\|\phi\|_{C^{k+1}([0,T];C^4(\Omega))}$ but is independent of
$\Delta t$ and $h$.
\end{proof}

\begin{lemma}[Error recursion via an energy argument]\label{lem:error_recursion}
Let $\widetilde{\phi}^{\,n+1}$ be the exact solution of the implicit stabilized
BDF$k$ scheme. Then there exists a constant $C>0$, independent of $\Delta t$ and $h$, such that
\begin{equation}\label{eq:error_recursion_energy}
\|\widetilde e^{\,n+1}\|
\le
(1+C\Delta t)\sum_{j=0}^{k-1}\|e^{n-j}\|
+ C\Delta t(\Delta t^{\,k}+h^2),
\qquad k=1,2,3,4 .
\end{equation}
\end{lemma}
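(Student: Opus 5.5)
We will subtract the equation satisfied by the exact solution (with residual $\tau^{n+1}$ from Lemma~\ref{lem:LTE}) from the implicit sBDF$k$ scheme, in which the history values are the numerical solutions $\phi^{n},\dots,\phi^{n-k+1}$. Dividing the unified formulation by $\beta_k$ and writing it in operator form, this gives
\[
\bigl(a_{0,k}+2B\Delta t\bigr)\widetilde e^{\,n+1}-\alpha\Delta t\,\Delta_h\widetilde e^{\,n+1}
=\sum_{l=1}^{k}c_{l,k}\,e^{n+1-l}
+\Delta t\bigl[\mathcal N(\widetilde\phi^{\,n+1})-\mathcal N(\phi(t_{n+1}))\bigr]
-\Delta t\,\tau^{n+1},
\]
where $c_{l,k}=-a_{l,k}-(-1)^l\binom{k}{l}B\Delta t$. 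The diffusion term is written as $-\alpha\Delta t\Delta_h$, since the diagonal $4\alpha\Delta t/h^2$ together with the neighbor sum recombines into the Laplacian. We need $\widetilde\phi^{\,n+1}\in[-1,1]$ so that \eqref{eq:N_Lipschitz} applies. In practice $\widetilde\phi^{\,n+1}$ is identified with the fixed point of the cut-off iteration, which lies in $[-1,1]$. Alternatively, one can extend $\mathcal N$ globally Lipschitz outside $[-1,1]$, and we will state this explicitly.

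Next we take the discrete $\ell^2$ inner product of the error equation with $\widetilde e^{\,n+1}$. By summation by parts, $-(\Delta_h\widetilde e^{\,n+1},\widetilde e^{\,n+1})\ge0$ for homogeneous Dirichlet or periodic data (the boundary error vanishes), so the diffusion term is dropped. The nonlinear difference is bounded by $2B\Delta t\|\widetilde e^{\,n+1}\|^2$, and this is absorbed exactly by the stabilization contribution $2B\Delta t\|\widetilde e^{\,n+1}\|^2$ on the left; this is why the $B\phi$ splitting was designed. What remains is
\[
a_{0,k}\|\widetilde e^{\,n+1}\|^2\le\Bigl(\sum_{l=1}^k|c_{l,k}|\,\|e^{n+1-l}\|+\Delta t\|\tau^{n+1}\|\Bigr)\|\widetilde e^{\,n+1}\|.
\]
Dividing by $a_{0,k}\|\widetilde e^{\,n+1}\|$ and using $|c_{l,k}|\le|a_{l,k}|+C\Delta t$ together with Lemma~\ref{lem:LTE} yields
\[
\|\widetilde e^{\,n+1}\|\le\sum_{l}\bigl(|a_{l,k}|/a_{0,k}+C\Delta t\bigr)\|e^{n+1-l}\|+C\Delta t(\Delta t^k+h^2).
\]

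The main obstacle is the constants $|a_{l,k}|/a_{0,k}$. They are not bounded by $1$ individually for $k\ge2$; for example, $|a_{1,2}|/a_{0,2}=4/3$. The stated inequality in \eqref{eq:error_recursion_energy} carries the factor $(1+C\Delta t)$ in front of the sum, so a naive bound gives a constant larger than $1$. I would first check whether the statement is meant to absorb such $O(1)$ constants into $C$. Since the bound is a sum over the $k$ previous errors rather than a maximum, it is already only a one-step recursion to be closed later via the zero-stability of BDF$k$ (the standard $G$-stability or Nevanlinna--Odeh multiplier technique for $k\le 5$). Accordingly, I would present the estimate with coefficients $\gamma_{l,k}+C\Delta t$, where $\gamma_{l,k}=|a_{l,k}|/a_{0,k}$.

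To obtain the literal form $(1+C\Delta t)$, the fallback is to test instead with a Nevanlinna--Odeh multiplier combination $\widetilde e^{\,n+1}-\eta_k e^{n}$. This produces a $G$-norm telescoping structure, in which the positivity of the BDF$k$ multiplier form replaces the crude triangle inequality. The nonlinearity and the stabilization increments are then handled as $O(\Delta t)$ perturbations bounded by Cauchy--Schwarz and Young. Norm equivalence then gives a bound of the form $(1+C\Delta t)$ times the previous errors. The remaining steps are routine: the bounds on the $c_{l,k}$ and the application of Lemma~\ref{lem:LTE}.
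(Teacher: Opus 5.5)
Your first two paragraphs follow the paper's proof. You derive the error equation from the unified sBDF$k$ form, test with $\widetilde e^{\,n+1}$, drop the diffusion term by the discrete Green identity, absorb the $2B\Delta t$ Lipschitz bound into the stabilization, and apply Cauchy--Schwarz to the history and residual terms. What this yields, $\|\widetilde e^{\,n+1}\|\le\sum_{l=1}^{k}(\gamma_{l,k}+C\Delta t)\|e^{n+1-l}\|+C\Delta t(\Delta t^{k}+h^2)$, is correct.

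Your objection is also well founded, and it applies equally to the paper's own proof. The paper's last display is $a_{0,k}\|\widetilde e^{\,n+1}\|^2\le C(1+\Delta t)\sum_j\|e^{n-j}\|\,\|\widetilde e^{\,n+1}\|+\cdots$. Dividing by $a_{0,k}\|\widetilde e^{\,n+1}\|$ gives the $O(1)$ factor $C(1+\Delta t)/a_{0,k}$, not $1+C\Delta t$. For $k=1$ the literal form does hold ($c_{1,1}=1+B\Delta t$, $a_{0,1}=1$). For $k\ge2$, however, the statement is false as written. Take the following data:
\begin{itemize}
\item $k=2$ and $f\equiv0$, which is admissible for every $B>0$, so $\mathcal N(s)=Bs$;
\item periodic boundary conditions and exact solution $\phi\equiv0$, so $\tau^{n+1}=0$;
\item $\phi^{n-1}=0$ and $\phi^n=\delta\mathbf{1}$ with $0<\delta\le\frac12$.
\end{itemize}
Since $\Delta_h\mathbf{1}=0$, the implicit sBDF2 equation has the solution $\widetilde\phi^{\,n+1}=\frac{4(1+B\Delta t)}{3+2B\Delta t}\,\delta\mathbf{1}\in[-1,1]$. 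Hence $\|\widetilde e^{\,n+1}\|\ge\frac43\|e^n\|$ while $\|e^{n-1}\|=0$, which contradicts \eqref{eq:error_recursion_energy} as $\Delta t,h\to0$. Choosing $\delta=\Delta t^2$ and $h\le\Delta t$ shows that accurate starting values do not help. For $k=3,4$ the limiting ratios are $18/11$ and $48/25$.

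So your fallback cannot succeed, and the step that breaks is the last one. The Nevanlinna--Odeh/$G$-stability argument gives the factor $1+C\Delta t$ only for the $G$-norm of the stacked error vector. Passing back to $\|\widetilde e^{\,n+1}\|$ and $\sum_j\|e^{n-j}\|$ by norm equivalence costs the ratio of the equivalence constants. That ratio is a fixed number larger than $1$, exactly the $O(1)$ constant you were trying to remove.

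There is a further obstruction for $k=3,4$, where $\eta_k>0$. The cross term $\eta_k\alpha\Delta t(\nabla_h\widetilde e^{\,n+1},\nabla_h e^{n})$ can only be absorbed by the gradient term produced at the previous step. The argument is therefore global, not a one-step bound valid for arbitrary history.

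The correct course is your first option. State the lemma with coefficients $\gamma_{l,k}+C\Delta t$, and carry out the multiplier/$G$-norm argument inside Theorem~\ref{thm:global_error}. Even the literal recursion would not close that theorem through the paper's summation step, since $x_{n+1}=(1+C\Delta t)(x_n+x_{n-1})$ grows geometrically in $n$.

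One smaller point concerns identifying $\widetilde\phi^{\,n+1}$ with the fixed point of the cut-off iteration. That fixed point does not satisfy the implicit scheme at nodes where the cut-off is active, so the error equation you test does not hold for it. You can fix this in either of two ways:
\begin{itemize}
\item use the extension $\mathcal N\circ\Pi_{[-1,1]}$, which keeps the Lipschitz constant $2B$; or
\item invoke $(\Pi_{[-1,1]}(a)-b)(a-b)\ge(\Pi_{[-1,1]}(a)-b)^2$ for $b\in[-1,1]$, which recovers the same estimate.
\end{itemize}
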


\begin{proof}
We start from the unified sBDF$k$ formulation.
Using the same notation and coefficients as in Section~3, the implicit
sBDF$k$ scheme can be written in the form
\begin{equation}\label{eq:sBDFk_energy_form}
\beta_k a_{0,k}\,\widetilde{\phi}^{\,n+1}
- \beta_k\alpha\Delta t\,\Delta_h \widetilde{\phi}^{\,n+1}
+ 2\beta_k B\Delta t\,\widetilde{\phi}^{\,n+1}
=
\mathcal H_k (\widetilde{\phi})
+ \beta_k\Delta t\,\mathcal N(\widetilde{\phi}^{\,n+1}),
\end{equation}
where $\mathcal H_k(\widetilde{\phi})$ collects all history terms involving
$\widetilde{\phi}^n,\widetilde{\phi}^{n-1},\dots,\widetilde{\phi}^{n-k+1}$.

Substituting the exact solution $\phi(\cdot,t_{n+1})$ into
\eqref{eq:sBDFk_energy_form} yields a residual $\tau^{n+1}$ satisfying
$\|\tau^{n+1}\|\le C(\Delta t^{\,k}+h^2)$ (Lemma~\ref{lem:LTE}).
Subtracting the resulting equation from \eqref{eq:sBDFk_energy_form}, we obtain the
error equation
\begin{multline}\label{eq:error_eq_energy}
\beta_k a_{0,k}\,\widetilde e^{\,n+1}
- \beta_k\alpha\Delta t\,\Delta_h \widetilde e^{\,n+1}
+ 2\beta_k B\Delta t\,\widetilde e^{\,n+1}
=
(\mathcal H_k (\widetilde{\phi}) - \mathcal H_k(\phi) )\\
+ \beta_k\Delta t
\bigl(\mathcal N(\widetilde{\phi}^{\,n+1})
- \mathcal N(\phi(\cdot,t_{n+1}))\bigr)
- \Delta t\,\tau^{n+1}.
\end{multline}

Taking the discrete $\ell^2$ inner product of \eqref{eq:error_eq_energy} with
$\widetilde e^{\,n+1}$ yields
\begin{multline}
\beta_k a_{0,k}\|\widetilde e^{\,n+1}\|^2
\;-\; \beta_k\alpha\Delta t\,
(\Delta_h \widetilde e^{\,n+1},\widetilde e^{\,n+1})
+ 2 \beta_k B\Delta t\|\widetilde e^{\,n+1}\|^2  
=
(\mathcal H_k (\widetilde{\phi}) 
 - \mathcal H_k(\phi),\widetilde e^{\,n+1})\\
+ \beta_k\Delta t\,
(\mathcal N(\widetilde{\phi}^{\,n+1})
- \mathcal N(\phi(\cdot,t_{n+1})),\widetilde e^{\,n+1})
- \Delta t(\tau^{n+1},\widetilde e^{\,n+1}).
\end{multline}

By the discrete Green's identity, we obtain
\[
\beta_k a_{0,k}\|\widetilde e^{\,n+1}\|^2
+ \beta_k\alpha\Delta t\,\|\nabla_h \widetilde e^{\,n+1}\|^2
+ 2 \beta_k B\Delta t\|\widetilde e^{\,n+1}\|^2
\ge \beta_k a_{0,k}\|\widetilde e^{\,n+1}\|^2.
\]

Next, we estimate the right-hand side.

For the History terms, the difference $\mathcal H_k(\widetilde{\phi})-\mathcal H_k(\phi)$ consists of linear combinations of
$e^n$, $\dots$, $e^{n-k+1}$ with coefficients depending only on $k$ and $B\Delta t$.
Therefore,
\[
|(\mathcal H_k(\widetilde{\phi})-\mathcal H_k(\phi),\widetilde e^{\,n+1})|
\le
C(1+\Delta t)\sum_{j=0}^{k-1}\|e^{n-j}\|\,
\|\widetilde e^{\,n+1}\|.
\]

For the nonlinear term, using the Lipschitz continuity \eqref{eq:N_props},
\[
(\mathcal N(\widetilde{\phi}^{\,n+1})
- \mathcal N(\phi(\cdot,t_{n+1})),\widetilde e^{\,n+1})
\le
2B\|\widetilde e^{\,n+1}\|^2.
\]
This term is absorbed by the stabilization contribution
$\beta_k B\Delta t\|\widetilde e^{\,n+1}\|^2$ on the left-hand side.

For the truncation error, we apply the Cauchy--Schwarz and Lemma~\ref{lem:LTE},
\[
|(\tau^{n+1},\widetilde e^{\,n+1})|
\le \|\tau^{n+1}\|\,\|\widetilde e^{\,n+1}\|
\le C(\Delta t^{\,k}+h^2)\|\widetilde e^{\,n+1}\|.
\]

Combining the above estimates yields
\[
a_{0,k}\|\widetilde e^{\,n+1}\|^2
\le
C(1+\Delta t)\sum_{j=0}^{k-1}\|e^{n-j}\|\,
\|\widetilde e^{\,n+1}\|
+ C\Delta t(\Delta t^{\,k}+h^2)\|\widetilde e^{\,n+1}\|.
\]
Dividing by $a_{0,k}\|\widetilde e^{\,n+1}\|$ gives
\eqref{eq:error_recursion_energy}.
\end{proof}

\begin{lemma}[Termination error of the fixed-point iteration]
\label{lem:termination_error}
Let $\widetilde{\phi}^{\,n+1}$ be the fixed point of the iteration and
$\phi^{n+1}$ the algorithmic solution obtained by terminating the iteration once
\[
\|\phi^{n+1,(m+1)}-\phi^{n+1,(m)}\| < \varepsilon .
\]
Then
\[
\|\phi^{n+1}-\widetilde{\phi}^{\,n+1}\|
\le \frac{\varepsilon}{1-\rho_k}.
\]
\end{lemma}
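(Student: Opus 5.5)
We will prove this with the standard a posteriori estimate for a contraction, applied to the algorithmic map that includes the cut-off. Define, for fixed history data $\phi^n,\dots,\phi^{n-k+1}$, the map
\[
\mathcal T_k(v) := \Pi_{[-1,1]}\Bigl(\tfrac{1}{\beta_k a_{0,k}+A_k}\bigl(\mathcal H_k+\mathcal L_k v+\beta_k\Delta t\,\mathcal N(v)\bigr)\Bigr)
\]
on grid functions with values in $[-1,1]$. For $k=1$ we drop $\Pi_{[-1,1]}$; its range stays in $[-1,1]$ by the monotonicity remark. The algorithmic iterates satisfy $\phi^{n+1,(m+1)}=\mathcal T_k(\phi^{n+1,(m)})$.

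\textbf{Contraction.} The argument in the proof of Theorem~\ref{thm:unified_contractivity} is really a statement about $\mathcal T_k$ itself: for any $u,v$ with values in $[-1,1]$,
\[
\|\mathcal T_k u-\mathcal T_k v\|\le\rho_k\|u-v\|.
\]
This follows from the triangle inequality, the Lipschitz bound \eqref{eq:N_Lipschitz}, and the non-expansiveness of the cut-off. Since the set of grid functions with values in $[-1,1]$ is closed and finite dimensional, Banach's theorem gives a unique fixed point. We identify $\widetilde\phi^{\,n+1}$ with this fixed point, which is the fixed point of the iteration referred to in the statement. We also have $\phi^{n+1,(m)}\to\widetilde\phi^{\,n+1}$.

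\textbf{Error bound.} Let $m$ be the stopping index, so that $\phi^{n+1}=\phi^{n+1,(m+1)}$ and $\|\phi^{n+1,(m+1)}-\phi^{n+1,(m)}\|<\varepsilon$. Using the fixed-point property and the contraction,
\[
\|\phi^{n+1,(m+1)}-\widetilde\phi^{\,n+1}\|=\|\mathcal T_k\phi^{n+1,(m)}-\mathcal T_k\widetilde\phi^{\,n+1}\|\le\rho_k\|\phi^{n+1,(m)}-\widetilde\phi^{\,n+1}\|.
\]
Inserting $\phi^{n+1,(m+1)}$ on the right and applying the triangle inequality gives
\[
\|\phi^{n+1,(m+1)}-\widetilde\phi^{\,n+1}\|\le\rho_k\|\phi^{n+1,(m)}-\phi^{n+1,(m+1)}\|+\rho_k\|\phi^{n+1,(m+1)}-\widetilde\phi^{\,n+1}\|.
\]
Rearranging yields the bound $\rho_k\varepsilon/(1-\rho_k)\le\varepsilon/(1-\rho_k)$. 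If instead $\phi^{n+1}$ is taken to be $\phi^{n+1,(m)}$, the analogous inequality $\|\phi^{n+1,(m)}-\widetilde\phi^{\,n+1}\|\le\|\phi^{n+1,(m)}-\phi^{n+1,(m+1)}\|+\rho_k\|\phi^{n+1,(m)}-\widetilde\phi^{\,n+1}\|$ gives exactly $\varepsilon/(1-\rho_k)$. Either way the claimed bound holds. As an alternative route, one can sum the geometric tail $\sum_{j\ge m}\rho_k^{\,j-m}\|\phi^{n+1,(j+1)}-\phi^{n+1,(j)}\|$ obtained by iterating Theorem~\ref{thm:unified_contractivity}.

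\textbf{Main obstacle.} The delicate point is what $\widetilde\phi^{\,n+1}$ means for $k\ge2$. The fixed point of the cut-off map $\mathcal T_k$ need not coincide with the solution of the un-truncated implicit scheme \eqref{eq:sBDF3} (or its analogues) if that solution leaves $[-1,1]$. The lemma should therefore be read with $\widetilde\phi^{\,n+1}$ being the fixed point of the iteration, as stated. To relate it to the implicit sBDF$k$ solution when the latter lies in $[-1,1]$, one notes that the two then coincide, since the cut-off acts as the identity there. We would make this identification explicit before carrying out the estimate above.
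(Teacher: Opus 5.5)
Your argument is correct, but your main route differs slightly from the paper's. The paper takes $\phi^{n+1}=\phi^{n+1,(m_\star)}$, the earlier iterate of the stopping pair. It iterates the successive-difference inequality of Theorem~\ref{thm:unified_contractivity} to get $\|\phi^{n+1,(m_\star+r+1)}-\phi^{n+1,(m_\star+r)}\|\le\rho_k^{\,r}\varepsilon$, writes $\phi^{n+1,(m_\star)}-\widetilde\phi^{\,n+1}$ as a telescoping series, and sums the geometric tail. That is exactly the alternative you mention at the end.

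You instead turn the computation in that theorem into a Lipschitz bound for the iteration map $\mathcal T_k$ on arbitrary pairs of $[-1,1]$-valued grid functions. You then invoke Banach's theorem and apply the one-step a posteriori estimate against the fixed point.

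\emph{What the paper's route buys.} It needs only the successive-difference inequality as literally stated. However, it uses without comment that the iterates converge to the object it calls $\widetilde\phi^{\,n+1}$, which the telescoping identity requires. Successive differences alone also give existence of a limit but not uniqueness of the fixed point.

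\emph{What your route buys.} It needs the pairwise estimate, which the same triangle-inequality computation delivers. It also needs invariance of the $[-1,1]$-valued set: this is automatic from the cut-off for $k\ge2$, and for $k=1$ it follows from the computation in Theorem~\ref{thm:MBP_sBDF1} given $\|\phi^n\|_\infty\le1$. In exchange, you get existence, uniqueness and convergence explicitly, which justifies the phrase ``the fixed point.'' You also get the sharper bound $\rho_k\varepsilon/(1-\rho_k)$ if the later iterate is returned.

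Your caveat for $k\ge2$ is well founded. Section~\ref{sec:error_analysis} identifies $\widetilde\phi^{\,n+1}$ with the solution of the untruncated implicit scheme. That solution coincides with the fixed point of $\mathcal T_k$ only when it lies in $[-1,1]$. This does not affect the present lemma, whose statement refers to the fixed point of the iteration. It does matter for Lemma~\ref{lem:error_recursion}, where the Lipschitz bound is applied at $\widetilde\phi^{\,n+1}$.
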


\begin{proof}
Let $m_\star$ be the stopping index and set $\phi^{n+1}:=\phi^{n+1,(m_\star)}$.
Since the iteration is contractive with factor $\rho_k\in(0,1)$, we have for any $r\ge0$
\[
\|\phi^{n+1,(m_\star+r+1)}-\phi^{n+1,(m_\star+r)}\|
\le \rho_k^{\,r}\,
\|\phi^{n+1,(m_\star+1)}-\phi^{n+1,(m_\star)}\|.
\]
By the stopping criterion,
$\|\phi^{n+1,(m_\star+1)}-\phi^{n+1,(m_\star)}\|<\varepsilon$.
Moreover, the fixed point $\widetilde{\phi}^{\,n+1}$ satisfies
\[
\phi^{n+1,(m_\star)}-\widetilde{\phi}^{\,n+1}
= \sum_{r=0}^{\infty}\bigl(\phi^{n+1,(m_\star+r)}-\phi^{n+1,(m_\star+r+1)}\bigr),
\]
hence by the triangle inequality,
\[
\|\phi^{n+1}-\widetilde{\phi}^{\,n+1}\|
\le
\sum_{r=0}^{\infty}
\|\phi^{n+1,(m_\star+r+1)}-\phi^{n+1,(m_\star+r)}\|
\le
\sum_{r=0}^{\infty}\rho_k^{\,r}\,\varepsilon
=
\frac{\varepsilon}{1-\rho_k}.
\]
This proves the lemma.
\end{proof}


\begin{theorem}[Error estimate]\label{thm:global_error}
Let the assumptions of Lemma~\ref{lem:LTE} hold, and choose the stopping tolerance as
\[
\varepsilon = C_\varepsilon \min\{\Delta t^{\,k+1},h^2\}.
\]
Then there exists a constant $C>0$, independent of $\Delta t$ and $h$, such that
\[
\|e^{n}\|
\le
C e^{CT}\Big(\sum_{j=0}^{k-1}\|e^{j}\| + \Delta t^{\,k}+h^2\Big),
\qquad t_n\le T .
\]
In particular, if the starting values satisfy
$\sum_{j=0}^{k-1}\|e^{j}\|\le C(\Delta t^{\,k}+h^2)$, then
\[
\|e^{n}\|\le C e^{CT}(\Delta t^{\,k}+h^2),
\qquad t_n\le T .
\]
\end{theorem}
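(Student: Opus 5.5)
The plan is to combine Lemma~\ref{lem:error_recursion} and Lemma~\ref{lem:termination_error} into a one-step recursion for the algorithmic error $e^{n+1}$, and then close it with a discrete Gronwall argument applied to a \emph{vector} of the last $k$ errors.

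\textbf{Step 1: one-step recursion.} Split
\[
e^{n+1}=\bigl(\phi^{n+1}-\widetilde{\phi}^{\,n+1}\bigr)+\widetilde e^{\,n+1},
\]
where $\widetilde{\phi}^{\,n+1}$ is the fixed point computed from the \emph{algorithmic} history $\phi^n,\dots,\phi^{n-k+1}$. This is the reading under which Lemma~\ref{lem:error_recursion} is stated in terms of $e^{n-j}$. Lemma~\ref{lem:termination_error} together with $\varepsilon=C_\varepsilon\min\{\Delta t^{k+1},h^2\}$ gives
\[
\|\phi^{n+1}-\widetilde{\phi}^{\,n+1}\|\le \frac{C_\varepsilon}{1-\rho_k}\min\{\Delta t^{k+1},h^2\}.
\]
Adding the estimate of Lemma~\ref{lem:error_recursion} would then give the recursion
\[
\|e^{n+1}\|\le (1+C\Delta t)\sum_{j=0}^{k-1}\|e^{n-j}\| + C\Delta t(\Delta t^k+h^2)+\frac{\varepsilon}{1-\rho_k}.
\]

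\textbf{Step 2: the obstacle with $\rho_k$.} Note that $1-\rho_k=\beta_k a_{0,k}/(\beta_k a_{0,k}+A_k)$, and $A_k\sim \Delta t/h^2$ can be large. So I would bound
\[
\frac{\varepsilon}{1-\rho_k}\le C_\varepsilon\Bigl(1+\frac{A_k}{\beta_k a_{0,k}}\Bigr)\min\{\Delta t^{k+1},h^2\}\le C\bigl(\Delta t^{k+1}+\Delta t\,h^2+\Delta t^2\bigr)\cdot\ldots
\]
In detail, the term $\frac{\Delta t}{h^2}\min\{\Delta t^{k+1},h^2\}\le \Delta t$ is too weak. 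The honest fix I would try first is to use the second branch of the minimum against the $\Delta t/h^2$ factor only when it pays off, and otherwise require $C_\varepsilon$, or the tolerance, to absorb the factor $1/(1-\rho_k)$. That is, I would interpret the stopping rule as effectively producing a termination error of size $C\Delta t(\Delta t^k+h^2)$ per step. This is the step where I expect the main difficulty, and I would state the needed bound $\|\phi^{n+1}-\widetilde\phi^{n+1}\|\le C\Delta t(\Delta t^k+h^2)$ explicitly as what the tolerance choice provides.

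\textbf{Step 3: the sum-type recursion.} The recursion has the amplification $(1+C\Delta t)\sum_{j}\|e^{n-j}\|$, which for $k\ge2$ is not directly Gronwall-friendly, since summing $k$ terms with coefficient near one would grow like $k^n$. Instead I would work with $E^n:=\max_{0\le j\le k-1}\|e^{n-j}\|$. The cleaner route is to revisit the proof of Lemma~\ref{lem:error_recursion}: divide by $\beta_k a_{0,k}$ and use the history coefficients together with the zero-stability of BDF$k$. Equivalently, cast the error equation in companion-matrix form
\[
\mathbf E^{n+1}=\mathbf M_k\mathbf E^n+\mathbf r^{n+1},
\qquad \mathbf E^n=(e^n,\dots,e^{n-k+1}),
\]
with $\mathbf M_k$ power-bounded for $k\le4$ by zero-stability. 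The perturbations coming from $B\Delta t$, from $\mathcal N$, and from $(I-c\Delta t\Delta_h)^{-1}$ (a contraction in $\ell^2$) are $O(\Delta t)$.

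\textbf{Step 4: conclusion.} Unrolling gives
\[
\|\mathbf E^{n}\|\le C(1+C\Delta t)^n\Bigl(\|\mathbf E^{k-1}\|+\sum_{m}\|\mathbf r^{m}\|\Bigr),
\]
where $\sum_m\|\mathbf r^m\|\le n\,C\Delta t(\Delta t^k+h^2)\le CT(\Delta t^k+h^2)$ and $(1+C\Delta t)^n\le e^{CT}$. This yields the claimed estimate, and the stated corollary follows directly from the assumption on the starting values.
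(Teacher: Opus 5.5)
Your Step~1 is the same as the paper's first step. Both of your objections to finishing from there are correct. The paper simply asserts $\varepsilon/(1-\rho_k)\le C(\Delta t^{k+1}+h^2)$, enters it into the recursion as $C\Delta t(\Delta t^k+h^2)$, and then claims that summing over $k$ consecutive indices gives $E^{n+1}\le(1+C\Delta t)E^n$ for $E^n=\sum_j\|e^{n-j}\|$. That last step does not follow when $k\ge2$. However, your proposal closes neither gap.

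\emph{Step~2.} The bound $\|\phi^{n+1}-\widetilde\phi^{\,n+1}\|\le C\Delta t(\Delta t^k+h^2)$ is postulated, not derived, and it cannot hold uniformly under the stated hypotheses. Lemma~\ref{lem:termination_error} gives $\frac{\varepsilon}{1-\rho_k}=C_\varepsilon\bigl(1+\frac{A_k}{\beta_k a_{0,k}}\bigr)\min\{\Delta t^{k+1},h^2\}$, and $A_k\sim\alpha\Delta t/h^2$ is unbounded. This is not slack in the lemma: when $\Delta t\gg h^2$, the iteration contracts the smoothest grid mode only by a factor $1-O(h^2/\Delta t)$. Concretely, fix $\Delta t$ and let $h\to0$, so that $\varepsilon=C_\varepsilon h^2$. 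The first update $\|\phi^{n+1,(1)}-\phi^{n}\|$ is an $O(\Delta t)$ quantity divided by the diagonal $\beta_k a_{0,k}+A_k\sim\alpha\Delta t/h^2$, hence itself $O(h^2)$. So the stopping test can be met at $m=0$, leaving $\phi^{n+1}$ within $O(h^2)$ of $\phi^n$, which is an $O(\Delta t)$ error per step. You therefore have to add a hypothesis explicitly, for example:
\begin{itemize}
\item a mesh coupling $\Delta t^{k+1}\lesssim h^4$, which gives $\frac{\Delta t}{h^2}\min\{\Delta t^{k+1},h^2\}\le\frac{\Delta t^{k+2}}{h^2}\lesssim\Delta t\,h^2$; or
\item a tolerance rescaled by $1-\rho_k$, such as $\varepsilon=C_\varepsilon(1-\rho_k)\Delta t^{k+1}$.
\end{itemize}

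\emph{Step~3.} The companion form $\mathbf E^{n+1}=\mathbf M_k\mathbf E^n+\mathbf r^{n+1}$ with $O(\Delta t)$ perturbations leaves out the cut-off, and the cut-off is exactly what makes $k\ge2$ hard.
\begin{itemize}
\item The algorithmic iterates converge to the fixed point of $\Pi_{[-1,1]}\circ G$, where $G$ is the un-cut update. This is not the solution of the implicit scheme treated in Lemma~\ref{lem:error_recursion}; your Step~1, like the paper, silently identifies the two. The history passed to the next step is also post-cut-off.
\item Pointwise, $\Pi_{[-1,1]}(v)-\Pi_{[-1,1]}(w)=\theta(v-w)$ with $\theta\in[0,1]$. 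So the true error recursion carries a state-dependent diagonal factor $\Theta^{n+1}$ that does not commute with $\Delta_h$. That is an $O(1)$ change to the companion blocks, and zero-stability of BDF$k$ alone says nothing about products of such switched matrices.
\item Falling back on non-expansiveness controls only norms. The triangle inequality then gives history weight $\sum_{l\ge1}|a_{l,k}|/a_{0,k}$, which equals $5/3$, $29/11$, $103/25$ for $k=2,3,4$. That is exactly the non-Gronwall recursion you correctly rejected.
\end{itemize}
Some further ingredient is needed. One option is an $\ell^\infty$ error bound showing the cut-off is inactive for small $\Delta t,h$, which requires the exact solution to stay a fixed distance inside $(-1,1)$.

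Separately, even without cut-off, $(I-c\Delta t\Delta_h)^{-1}$ is not an $O(\Delta t)$ perturbation of the identity, since its eigenvalues go down to $O(h^2/\Delta t)$. What you actually need is uniform power-boundedness of the BDF$k$ companion matrix for $z=\lambda\Delta t\in(-\infty,0]$, i.e.\ stability of BDF$k$, $k\le4$, on the whole negative real axis. Apply it after diagonalizing $\Delta_h$, and treat only the $B\Delta t$ and $\mathcal N$ contributions as $O(\Delta t)$ perturbations.
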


\begin{proof}
We decompose the total error at time $t_{n+1}$ as
\[
e^{n+1}
=
\phi^{n+1}-\phi(\cdot,t_{n+1})
=
\underbrace{\bigl(\phi^{n+1}-\widetilde{\phi}^{\,n+1}\bigr)}_{\text{termination error}}
+
\underbrace{\bigl(\widetilde{\phi}^{\,n+1}-\phi(\cdot,t_{n+1})\bigr)}_{\widetilde e^{\,n+1}}.
\]
For $k=1$, no cut-off is applied. For $k\ge2$, the pointwise cut-off operator is
non-expansive and satisfies $\Pi_{[-1,1]}(\phi(\cdot,t_{n+1}))=\phi(\cdot,t_{n+1})$, hence
the same estimate applies to the algorithmic solution $\phi^{n+1}$.

By Lemma~\ref{lem:error_recursion} and Lemma~\ref{lem:termination_error},
\[
\|e^{n+1}\|
\le
(1+C\Delta t)\sum_{j=0}^{k-1}\|e^{n-j}\|
+ C\Delta t(\Delta t^{\,k}+h^2)
+ \frac{\varepsilon}{1-\rho_k}.
\]
With the choice $\varepsilon=C_\varepsilon\min\{\Delta t^{k+1},h^2\}$ and $\Delta t\le 1$,
we have $\frac{\varepsilon}{1-\rho_k}\le C(\Delta t^{k+1}+h^2)\le C(\Delta t^k+h^2)$, hence
\begin{equation}\label{eq:one_step_rec_detail}
\|e^{n+1}\|
\le
(1+C\Delta t)\sum_{j=0}^{k-1}\|e^{n-j}\|
+ C\Delta t(\Delta t^{\,k}+h^2).
\end{equation}

Define $E^n:=\sum_{j=0}^{k-1}\|e^{n-j}\|$ for $n\ge k-1$.
Summing \eqref{eq:one_step_rec_detail} over the $k$ consecutive indices
$n,n-1,\dots,n-k+1$ gives
\[
E^{n+1}\le (1+C\Delta t)E^n + C\Delta t(\Delta t^{\,k}+h^2).
\]
Iterating this inequality yields
\[
E^n \le (1+C\Delta t)^{n-(k-1)}E^{k-1}
+ C\Delta t(\Delta t^{\,k}+h^2)\sum_{m=k-1}^{n-1}(1+C\Delta t)^{n-1-m}.
\]
Using $(1+C\Delta t)^{n}\le e^{CT}$ for $t_n\le T$ and a geometric-series bound for the sum,
we obtain
\[
E^n \le e^{CT}E^{k-1} + Ce^{CT}(\Delta t^{\,k}+h^2).
\]
Since $\|e^n\|\le E^n$, this proves the desired global estimate. The final statement follows
from the assumed accuracy of the starting values.
\end{proof}

\section{Discrete maximum bound principle}\label{sec:MBP}

In this section, we establish unconditional preservation of the discrete maximum
bound principle (MBP) for the proposed sBDF schemes.
Throughout, we assume that the continuous problem \eqref{eq:semi-discrete}
satisfies the maximum bound principle
\[
|\phi(x,t)| \le 1, \qquad \forall (x,t)\in \Omega\times[0,T],
\]
and that the nonlinear term $\mathcal N(\cdot)$ satisfies
\begin{equation}\label{eq:N_bounded}
|\mathcal N(s)| \le B, \qquad \forall s\in[-1,1].
\end{equation}

\begin{theorem}[Discrete MBP for sBDF1]\label{thm:MBP_sBDF1}
For any time-step size $\Delta t>0$, the sBDF1 algorithm preserves the discrete
maximum bound principle, i.e.,
\begin{equation}\label{eq:MBP_sBDF1}
\|\phi^{n}\|_{\infty} \le 1, \qquad \forall n\ge0,
\end{equation}
provided that the initial data satisfy $\|\phi^{0}\|_{\infty}\le1$.
\end{theorem}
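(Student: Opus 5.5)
We argue by induction on $n$, and within each time step by induction on the fixed-point index $m$. Since $\phi^{n+1}$ is defined as the stopped iterate $\widetilde{\phi}^{\,n+1,(m_\star)}$ of \eqref{eq:fixed_point_iteration_aux}, it suffices to show the following: if $\|\phi^n\|_\infty\le 1$, then every iterate $\widetilde{\phi}^{\,n+1,(m)}$ satisfies $\|\widetilde{\phi}^{\,n+1,(m)}\|_\infty\le1$. We never need the MBP for the exact fixed point, only for the computed iterates. The base case $m=0$ is immediate, because the initial guess is $\widetilde{\phi}^{\,n+1,(0)}=\phi^n$.

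For the inductive step in $m$, assume $|\widetilde{\phi}^{\,n+1,(m)}_{i,j}|\le 1$ at every grid point and consider the update \eqref{eq:fixed_point_iteration_aux} at a fixed $(i,j)$. We bound the bracketed right-hand side term by term using the triangle inequality.
\begin{itemize}
\item The history term satisfies $|(1+B\Delta t)\phi^n_{i,j}|\le 1+B\Delta t$, by the outer induction hypothesis and because its coefficient is positive.
\item The four neighbor values carry the nonnegative weight $\alpha\Delta t/h^2$ and are each bounded by $1$, so together they contribute at most $4\alpha\Delta t/h^2$.
\item By \eqref{eq:N_bounded} (equivalently \eqref{eq:N_bound} with $\beta=1$), the nonlinear term obeys $|\Delta t\,\mathcal N(\widetilde{\phi}^{\,n+1,(m)}_{i,j})|\le B\Delta t$. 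This is valid precisely because the inner induction hypothesis keeps the argument in $[-1,1]$.
\end{itemize}
Summing the three bounds, the bracket is bounded in absolute value by $1+\frac{4\alpha\Delta t}{h^2}+2B\Delta t$. This is exactly the denominator in \eqref{eq:fixed_point_iteration_aux}, so $|\widetilde{\phi}^{\,n+1,(m+1)}_{i,j}|\le1$. Nothing in this argument restricts $\Delta t$ or $h$.

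The key observation is that the stabilization term $B\Delta t$ on both sides is what balances the coefficients. The rewriting \eqref{eq:modified_pde} with $-B\phi$ contributes $B\Delta t$ to the denominator, and the stabilization adds a further $B\Delta t$ to both the denominator and the history coefficient. The resulting margin $2B\Delta t$ versus $B\Delta t+B\Delta t$ matches exactly. No sign condition on the neighbor sum is needed beyond nonnegativity of its weight, and no monotonicity of $\mathcal N$ is required, only its boundedness on $[-1,1]$.

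The only delicate point is boundary treatment. At nodes adjacent to the boundary, the neighbor sum includes boundary values. For Dirichlet data bounded by $1$ these are controlled in the same way. For homogeneous Neumann or periodic conditions, the ghost values are copies of interior iterates and are therefore also bounded by $1$. In every case the same weighted bound holds, which closes the induction over $m$. Taking $m=m_\star$ gives $\|\phi^{n+1}\|_\infty\le1$, and induction over $n$ from $\|\phi^0\|_\infty\le1$ yields \eqref{eq:MBP_sBDF1}.
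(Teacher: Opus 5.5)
Your proposal is correct and follows the paper's proof essentially verbatim. Both use a nested induction over $n$ and the fixed-point index $m$, in which the triangle inequality applied to the explicit update bounds the numerator by $1+B\Delta t+\frac{4\alpha\Delta t}{h^2}+B\Delta t$, which is exactly the denominator; your explicit handling of boundary-adjacent nodes and of the stopping index $m_\star$ (in place of the paper's loose ``limit as $m\to m_\star$'') is a small clarification the paper leaves implicit.
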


\begin{proof}
We consider the fixed-point iteration associated with sBDF1.
Let $\widetilde{\phi}^{n+1,(m)}$ denote the $m$-th iterate, with the initialization
$\widetilde{\phi}^{n+1,(0)}=\phi^{n}$.

Assume inductively that
\[
|\phi^{n}_{i,j}| \le 1,
\qquad
|\widetilde{\phi}^{n+1,(m)}_{i,j}| \le 1
\quad \text{for all } (i,j).
\]
Using the explicit fixed-point update for sBDF1, we obtain
\[
\begin{aligned}
|\widetilde{\phi}^{n+1,(m+1)}_{i,j}|
&\le
\frac{(1+B\Delta t)|\phi^{n}_{i,j}|}{1+\frac{4\alpha\Delta t}{h^2}+2B\Delta t}
+ \frac{\alpha\Delta t}{h^2}
\frac{\sum_{\text{nbr}} |\widetilde{\phi}^{n+1,(m)}|}{1+\frac{4\alpha\Delta t}{h^2}+2B\Delta t} \\
&\quad
+ \frac{\Delta t\,|\mathcal N(\widetilde{\phi}^{n+1,(m)}_{ij})|}{1+\frac{4\alpha\Delta t}{h^2}+2B\Delta t}.
\end{aligned}
\]
Invoking the induction hypothesis and the boundedness assumption
\eqref{eq:N_bounded}, we obtain
\[
|\widetilde{\phi}^{n+1,(m+1)}_{i,j}|
\le
\frac{1+B\Delta t+\frac{4\alpha\Delta t}{h^2}+B\Delta t}
     {1+\frac{4\alpha\Delta t}{h^2}+2B\Delta t}
=1.
\]
Therefore, the bound $|\widetilde{\phi}^{n+1,(m)}_{i,j}|\le1$ holds for all $m$.
Taking the limit as $m\to m_\star$ yields
$|\phi^{n+1}_{i,j}|\le1$, completing the proof.
\end{proof}

\begin{theorem}[Discrete MBP for sBDF$k$, $k=2,3,4$]\label{thm:MBP_sBDFk}
Let $k\in\{2,3,4\}$. For any $\Delta t>0$, the sBDF$k$ algorithms preserve the discrete
maximum bound principle:
\begin{equation}\label{eq:MBP_sBDFk}
\|\phi^{n}\|_{\infty} \le 1, \qquad \forall n\ge0,
\end{equation}
provided that
\[
\|\phi^{n-\ell}\|_{\infty}\le1, \qquad \ell=0,1,\dots,k-1.
\]
\end{theorem}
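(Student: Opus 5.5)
The plan is to argue by induction on the time level $n$, together with an inner induction on the iteration index $m$. The key observation is that for $k\ge2$ the bound does not come from any monotonicity of the update. The auxiliary iterate $\widetilde{\phi}^{\,n+1,(m+1)}$ may leave $[-1,1]$, because the history coefficients $-\beta_k a_{l,k}-\beta_k(-1)^l\binom{k}{l}B\Delta t$ have mixed signs. Instead, the bound is enforced by the cut-off. By definition, every algorithmic iterate with $m\ge1$ satisfies
\[
\phi^{n+1,(m)}_{i,j}=\Pi_{[-1,1]}\bigl(\widetilde{\phi}^{\,n+1,(m)}_{i,j}\bigr)\in[-1,1]
\]
pointwise, since $\Pi_{[-1,1]}$ maps $\mathbb R$ into $[-1,1]$.

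First, I would fix $n\ge k-1$ and assume, as in the hypothesis of the theorem, that $\|\phi^{n-\ell}\|_\infty\le1$ for $\ell=0,\dots,k-1$. The initial guess is $\phi^{n+1,(0)}=\phi^n$, so $\|\phi^{n+1,(0)}\|_\infty\le1$.

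Second, I would check that the update \eqref{eq:sBDF3_FPI} (and its analogues for $k=2,4$) is well defined, i.e., finite. This holds because it uses only the values $\phi^{n+1-l}$ and $\phi^{n+1,(m)}$, which lie in $[-1,1]$, and $\mathcal N$ evaluated there obeys \eqref{eq:N_bounded}. I would note that the analysis only requires the arguments of $\mathcal N$ to stay in $[-1,1]$, which is exactly what the cut-off guarantees. This matters because it also justifies the use of the Lipschitz bound \eqref{eq:N_Lipschitz} in Theorem~\ref{thm:unified_contractivity}.

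Third, applying $\Pi_{[-1,1]}$ gives $|\phi^{n+1,(m+1)}_{i,j}|\le1$ for all grid points. By induction on $m$, every iterate is bounded, including the stopped one $\phi^{n+1}=\phi^{n+1,(m_\ast)}$. If $m_\ast=0$ this is the initial guess $\phi^n$, which is already bounded.

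Finally, the hypothesis then holds with $n$ replaced by $n+1$, namely $\|\phi^{n+1-\ell}\|_\infty\le1$ for $\ell=0,\dots,k-1$. Induction in $n$ yields \eqref{eq:MBP_sBDFk} for all $n$, with the base case being the assumed bounds on the $k$ starting values (for instance produced by sBDF1, whose MBP is Theorem~\ref{thm:MBP_sBDF1}).

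There is no real analytical obstacle, since the argument holds for any $\Delta t>0$ and involves no time-step restriction. The only delicate point is bookkeeping. The statement must be read as applying to the algorithmic iterates $\phi^{n+1,(m)}$, not the auxiliary $\widetilde{\phi}$ nor the implicit-scheme solution $\widetilde{\phi}^{\,n+1}$, which need not satisfy the bound. I would also remark explicitly that the case $m_\ast=0$ is covered.
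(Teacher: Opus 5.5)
Your proposal is correct and follows the paper's proof, which likewise rests entirely on the fact that $\Pi_{[-1,1]}$ maps $\mathbb{R}$ into $[-1,1]$. Hence every algorithmic iterate, and in particular $\phi^{n+1}=\phi^{n+1,(m_\ast)}$, is bounded for any $\Delta t>0$. Your explicit treatment of the initial guess $\phi^{n+1,(0)}=\phi^n$ (the case $m_\ast=0$) and the outer induction on $n$ make precise what the paper leaves implicit, and your well-definedness step is harmless but not needed.
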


\begin{proof}
At time level $t_{n+1}$, let $\widetilde{\phi}^{\,n+1,(m+1)}$ denote the auxiliary
iterate generated by the fixed-point update before applying the cut-off.
The algorithmic iterate is then defined by
\[
\phi^{n+1,(m+1)} = \Pi_{[-1,1]}\!\left(\widetilde{\phi}^{\,n+1,(m+1)}\right),
\]
where $\Pi_{[-1,1]}$ denotes the pointwise projection onto $[-1,1]$.

By definition of the projection operator, we have
\[
\phi^{n+1,(m+1)}_{i,j} \in [-1,1],
\qquad \forall (i,j), \ \forall m\ge0.
\]
Therefore,
\[
\|\phi^{n+1,(m)}\|_{\infty} \le 1
\quad \text{for all } m,
\]
and in particular,
\[
\|\phi^{n+1}\|_{\infty}
= \|\phi^{n+1,(m_\star)}\|_{\infty}
\le 1.
\]
This completes the proof.
\end{proof}

\section{Discrete energy stability analysis}\label{sec:energy}

In this section, we establish unconditional discrete energy stability of the 
proposed sBDF schemes applied to the Allen--Cahn equation.
The Allen--Cahn equation serves as a prototypical example of a gradient-flow
system with a nonconvex potential, and therefore provides a stringent test for
energy-stable time discretizations.

The Allen--Cahn equation reads
\begin{equation}
\partial_t \phi = \alpha \Delta \phi + f(\phi),
\qquad
f(\phi)=\phi-\phi^3,
\label{eq:AC}
\end{equation}
which can be interpreted as the $L^2$-gradient flow associated with the
Ginzburg--Landau energy functional
\begin{equation}
E(\phi)
=
\int_\Omega
\left(
\frac{\alpha}{2} |\nabla \phi|^2
+ F(\phi)
\right)\,dx,
\qquad
F(\phi)=\frac14(\phi^2-1)^2.
\label{eq:AC_energy}
\end{equation}
The continuous solution satisfies the energy dissipation law
\begin{equation}
\frac{d}{dt}E(\phi(t))
=
-\left\|
\frac{\delta E}{\delta \phi}
\right\|_{2}^2
\le 0.
\label{eq:AC_energy_decay}
\end{equation}

Let $u$ be a grid function defined on a uniform Cartesian mesh with mesh
size $h$. We introduce the following discrete energy functional for Allen-Cahn equation.
\begin{definition}[Discrete Energy]
\begin{equation}
E_{h}[u]=
\frac{\alpha}{2}\|\nabla_h u \|^2 + (F(u), 1).
\label{eq:discrete_energy}
\end{equation}
\end{definition}

We first recall the discrete Green's identity, which will be used in
the energy analysis.

\begin{lemma}[Discrete Green's formula]
\label{lem:discrete_green}
Let $u$ and $v$ be grid functions satisfying periodic, homogeneous Neumann, or
homogeneous Dirichlet boundary conditions. Then
\begin{equation}
( \nabla_h u , \nabla_h v )
=
-( u , \Delta_h v ) .
\end{equation}
\end{lemma}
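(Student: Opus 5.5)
The plan is to prove the identity by discrete summation by parts, written out one coordinate direction at a time. First I fix notation. On the uniform grid, the forward difference quotients are
\[
(D_x^+u)_{i+1/2,j}=\frac{u_{i+1,j}-u_{i,j}}{h},\qquad (D_y^+u)_{i,j+1/2}=\frac{u_{i,j+1}-u_{i,j}}{h},
\]
and they live on edge midpoints. The discrete gradient is $\nabla_h u=(D_x^+u,D_y^+u)$. Its inner product $(\nabla_h u,\nabla_h v)$ is $h^2$ times the sum over all interior edges of the products of these quotients. The five-point Laplacian splits as $\Delta_h v=D_x^-D_x^+v+D_y^-D_y^+v$. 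Because the two directions decouple, it suffices to prove the one-dimensional identity
\[
h^2\sum_{i,j}(D_x^+u)_{i+1/2,j}(D_x^+v)_{i+1/2,j}
=-h^2\sum_{i,j}u_{i,j}(D_x^-D_x^+v)_{i,j}
\]
for each fixed $j$, and then add the corresponding $y$-identity.

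The core step is Abel summation. I write $w_{i+1/2}=(D_x^+v)_{i+1/2,j}$ and expand
\[
\sum_i (u_{i+1}-u_i)\,w_{i+1/2}=-\sum_i u_i\,(w_{i+1/2}-w_{i-1/2})+\text{boundary terms}.
\]
Since $h\,(D_x^-D_x^+v)_i=w_{i+1/2}-w_{i-1/2}$, the interior sum is exactly $-h\sum_i u_i (D_x^-D_x^+v)_i$. Multiplying by the remaining factor of $h$ then gives the claimed identity, provided the boundary terms vanish.

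The main obstacle is showing that the boundary terms vanish under each of the three boundary conditions. This depends on defining the grid and the inner products consistently with the boundary condition.
\begin{itemize}
\item \emph{Periodic:} the index is cyclic, so the telescoping sum has no endpoints and there are no boundary terms.
\item \emph{Homogeneous Dirichlet:} the boundary terms are of the form $u_0w_{1/2}$ and $u_Nw_{N-1/2}$, multiplied by $u$ at boundary nodes. These vanish because $u_0=u_N=0$, provided the sums for $(u,\Delta_h v)$ run over interior nodes only.
\item \emph{Homogeneous Neumann:} I will use the cell-centered or ghost-point convention $u_{0}=u_{1}$, $u_{N+1}=u_N$ (and likewise for $v$). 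Then the boundary fluxes $w_{1/2}$ and $w_{N+1/2}$ are zero, so the boundary terms again vanish. The gradient sum then runs only over interior edges.
\end{itemize}
I will state these conventions explicitly, since the identity is only exact with them. If one instead uses vertex-centered Neumann grids, trapezoidal weights $h^2/2$ at the boundary nodes are needed in $(\cdot,\cdot)$. Once the conventions are in place, summing the $x$ and $y$ contributions gives $(\nabla_h u,\nabla_h v)=-(u,\Delta_h v)$.
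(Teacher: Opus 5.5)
Your summation-by-parts argument is correct. There is no proof in the paper to compare it with: the paper only ``recalls'' this identity, and it never defines $\nabla_h$ or the index ranges in $(\cdot,\cdot)$.

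Your line-by-line Abel summation, with $\Delta_h=D_x^{-}D_x^{+}+D_y^{-}D_y^{+}$ and the gradient on edges, is the standard proof, and the bookkeeping checks out:
\begin{itemize}
\item In the Dirichlet case the boundary terms are $u_Nw_{N-1/2}-u_0w_{1/2}$. They vanish because $u_0=u_N=0$, once $(u,\Delta_h v)$ runs over interior nodes and the gradient sum includes the edges joining boundary nodes to interior nodes.
\item In the cell-centered Neumann case they are $u_Nw_{N+1/2}-u_1w_{1/2}$. They vanish because the ghost fluxes of $v$ are zero.
\end{itemize}

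Your version adds two things to the paper's bare citation. First, it states explicitly the conventions under which the identity is exact. Second, it shows that the boundary terms are local to each endpoint of each grid line. So the same argument, using the matching convention at each endpoint, also covers the mixed Dirichlet--Neumann conditions of Example~7.1, which the lemma as stated does not literally include.

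One small correction to your aside on vertex-centered Neumann grids. In two dimensions the corner nodes need weight $h^2/4$, not $h^2/2$. In addition, edges lying along a Neumann boundary must carry half weight in $(\nabla_h u,\nabla_h v)$ for the identity to hold.
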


For $k\ge 2$, the algorithm applies the pointwise cut-off $\Pi_{[-1,1]}$ after each FPI update 
to enforce the maximum bound principle .
The next lemma shows that for Allen--Cahn equation, this cut-off does not increase the discrete energy.

\begin{lemma}[Energy non-increase under cut-off]\label{lem:cutoff_AC}
Let $\Pi_{[-1,1]}(s):=\max\{\min\{s,1\},-1\}$ be applied pointwise to grid functions.
Then for any grid function $u$,
\begin{equation}\label{eq:cutoff_energy_AC}
E_h\big(\Pi_{[-1,1]}(u)\big)\le E_h(u).
\end{equation}
\end{lemma}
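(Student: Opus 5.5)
We compare the two pieces of the discrete energy $E_h$ separately: the gradient part $\frac{\alpha}{2}\|\nabla_h u\|^2$ and the potential part $(F(u),1)$. We then show that each is non-increasing when $u$ is replaced by $\Pi_{[-1,1]}(u)$. Write $\bar u:=\Pi_{[-1,1]}(u)$.

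\emph{Potential part.} The claim is pointwise: $F(\Pi_{[-1,1]}(s))\le F(s)$ for every real $s$. If $|s|\le1$ the cut-off leaves $s$ unchanged, so there is nothing to prove. If $|s|>1$, then $\Pi_{[-1,1]}(s)=\pm1$ and $F(\pm1)=0\le \tfrac14(s^2-1)^2=F(s)$. Multiplying by $h^2$ and summing over grid points gives $(F(\bar u),1)\le (F(u),1)$. This uses only that $F\ge0$ with minimizers exactly at $\pm1$.

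\emph{Gradient part.} Here $\|\nabla_h u\|^2$ is $h^2$ times a sum of squared difference quotients $|(u_{p}-u_{q})/h|^2$ over neighboring pairs $(p,q)$ in the $x$- and $y$-directions. The boundary conditions enter as follows:
\begin{itemize}
\item under periodic conditions, pairs wrap around;
\item under homogeneous Neumann conditions, ghost values equal interior ones;
\item under homogeneous Dirichlet conditions, boundary values are $0\in[-1,1]$, so the cut-off leaves them fixed.
\end{itemize}
The key fact is that $\Pi_{[-1,1]}$ is $1$-Lipschitz on $\mathbb R$: $|\Pi(a)-\Pi(b)|\le|a-b|$. It is the metric projection onto a closed interval, or one can check the cases directly. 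Applying this to every difference quotient gives $|\bar u_p-\bar u_q|\le|u_p-u_q|$ term by term. Summing yields $\|\nabla_h\bar u\|\le\|\nabla_h u\|$.

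Adding the two inequalities gives \eqref{eq:cutoff_energy_AC}.

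The only step needing care is the second one, because the form of $\|\nabla_h\cdot\|$ depends on the boundary treatment used in Lemma~\ref{lem:discrete_green}. I would state explicitly that in each of the three admissible cases, $\|\nabla_h u\|^2$ is a nonnegative combination of squared differences of values related by the cut-off. This holds because ghost or boundary values are either copies of interior values or zero, and both are compatible with applying $\Pi_{[-1,1]}$ pointwise. For non-homogeneous Dirichlet data the argument still works as long as the boundary data lie in $[-1,1]$.
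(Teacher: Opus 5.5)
Your proof is correct and follows the paper's argument: you handle the gradient part through the non-expansiveness of $\Pi_{[-1,1]}$ applied to each difference quotient, and the potential part by a pointwise comparison. Your justification of the pointwise comparison via $F\ge 0$ and $F(\pm1)=0$ is equivalent to the paper's appeal to the monotonicity of $F$ outside $[-1,1]$, and your explicit treatment of the boundary conditions supplies a detail the paper leaves implicit.
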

\begin{proof}
The projection operator is non-expansive, i.e.,
\[
|\Pi_{[-1,1]}(a)-\Pi_{[-1,1]}(b)|\le |a-b|,
\qquad \forall a,b\in\mathbb{R}.
\]
Applying this inequality to all discrete gradients yields
$\|\nabla_h \Pi_{[-1,1]}(u)\| \le \|\nabla_h u\|$.
Moreover, since $F(s)=\frac14(s^2-1)^2$ is monotone outside $[-1,1]$, the pointwise
projection does not increase the potential energy.
Combining the two parts proves the result.
\end{proof}

We will also need the following two technical lemmas for the energy stability analysis.

\begin{lemma}
Let $\tilde{\phi}^{n+1,(m+1)}$ be the auxiliary iterate produced by the fixed-point update of the sBDF$k$ scheme ($k \in \{1,2,3,4\}$) before applying the cut-off operator. Let $\delta^{m+1} := \tilde{\phi}^{n+1,(m+1)} - \phi^{n+1,(m)}$. The iterative difference satisfies the following discrete inner product identity:
\begin{align*}
\frac{\tilde{D}_{k}}{\beta_{k}\Delta t}\|\delta^{m+1}\|_{2}^{2}=\frac{1}{\beta_{k}\Delta t}\left[\mathcal{H}_{k}(\phi^n, \dots,\phi^{n-k+1})
-(a_{0,k}+\beta_{k}B\Delta t)\phi^{n+1,(m)},\delta^{m+1}\right]\\
+\alpha\left(\Delta_{h}\phi^{n+1,(m)},\delta^{m+1}\right)+\left( f(\phi^{n+1,(m)}),\delta^{m+1}\right),
\end{align*}
where $\tilde{D}_k := \beta_k a_{0,k} + \frac{4\beta_k \alpha \Delta t}{h^2} + 2\beta_k B \Delta t$, and $(a_{0,k}, \beta_k)$ are the scheme-dependent coefficients defined previously.
\end{lemma}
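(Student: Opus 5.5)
The plan is to derive the identity directly from the pointwise fixed-point update: rewrite it as an equation for the increment $\delta^{m+1}$, then take the discrete $\ell^2$ inner product with $\delta^{m+1}$. No estimates are needed; the lemma is purely algebraic.

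\textbf{Step 1 (increment equation).} Multiply the unified fixed-point update of Section~\ref{subsec:sBDFk} by $\tilde D_k=\beta_k a_{0,k}+\frac{4\beta_k\alpha\Delta t}{h^2}+2\beta_k B\Delta t$. Here $\mathcal H_k(\phi^n,\dots,\phi^{n-k+1})$ collects the history terms as in \eqref{eq:FP_unified}. This gives, at each grid point,
\[
\tilde D_k\,\tilde\phi^{n+1,(m+1)}_{i,j}=\mathcal H_k+\frac{\beta_k\alpha\Delta t}{h^2}\sum_{\rm nbr}\phi^{n+1,(m)}+\beta_k\Delta t\,\mathcal N(\phi^{n+1,(m)}_{i,j}).
\]
Subtract $\tilde D_k\phi^{n+1,(m)}_{i,j}$ from both sides, so the left-hand side becomes $\tilde D_k\delta^{m+1}_{i,j}$. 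Then split the right-hand side as follows.
\begin{itemize}
\item The term $\frac{4\beta_k\alpha\Delta t}{h^2}\phi^{n+1,(m)}$ coming from $\tilde D_k$ combines with the neighbor sum into $\beta_k\alpha\Delta t\,(\Delta_h\phi^{n+1,(m)})_{i,j}$. This uses the five-point stencil identity $\sum_{\rm nbr}u-4u_{i,j}=h^2(\Delta_h u)_{i,j}$.
\item Using \eqref{eq:N_def}, write $\mathcal N(\phi)=f(\phi)+B\phi$. The term $-2\beta_kB\Delta t\,\phi^{n+1,(m)}$ from $\tilde D_k$ combines with $\beta_k\Delta t\,B\phi^{n+1,(m)}$ to leave $-\beta_kB\Delta t\,\phi^{n+1,(m)}+\beta_k\Delta t\,f(\phi^{n+1,(m)})$.
\end{itemize}
Together with the remaining $-\beta_k a_{0,k}\phi^{n+1,(m)}$, this yields the pointwise relation
\[
\tilde D_k\delta^{m+1}=\mathcal H_k-(\beta_k a_{0,k}+\beta_kB\Delta t)\phi^{n+1,(m)}+\beta_k\alpha\Delta t\,\Delta_h\phi^{n+1,(m)}+\beta_k\Delta t\,f(\phi^{n+1,(m)}).
\]

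\textbf{Step 2 (inner product).} Divide by $\beta_k\Delta t$. Take the discrete inner product with $\delta^{m+1}$, using $(\delta^{m+1},\delta^{m+1})=\|\delta^{m+1}\|^2$. This produces exactly the claimed structure: the history/mass bracket divided by $\beta_k\Delta t$, the diffusion term $\alpha(\Delta_h\phi^{n+1,(m)},\delta^{m+1})$, and the reaction term $(f(\phi^{n+1,(m)}),\delta^{m+1})$.

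\textbf{Main obstacle.} The only delicate point is coefficient bookkeeping, in particular the normalization of $\mathcal H_k$ and of the mass coefficient multiplying $\phi^{n+1,(m)}$. In the unified representation the history coefficients carry the factor $\beta_k$, and the computation above gives the mass coefficient $\beta_k a_{0,k}+\beta_kB\Delta t$. The coefficient written in the statement, $a_{0,k}+\beta_kB\Delta t$, matches this only under the corresponding normalization of $\mathcal H_k$ and $a_{0,k}$ (for instance when $\beta_k=1$). I will fix this convention explicitly and track the $\beta_k$ factors line by line against the explicit schemes \eqref{eq:bdf2_fixed_point}, \eqref{eq:sBDF3_FPI} and \eqref{eq:sBDF4_FPI}. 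For example, for sBDF2, $\tilde D_2=3+\frac{8\alpha\Delta t}{h^2}+4B\Delta t$, and the stated form should reduce correctly. This check ensures the identity is stated consistently for all $k\in\{1,2,3,4\}$.
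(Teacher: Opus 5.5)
Your proof is correct and follows the paper's route exactly: clear $\tilde D_k$, subtract $\tilde D_k\phi^{n+1,(m)}$, fold the neighbor sum into $\beta_k\alpha\Delta t\,\Delta_h\phi^{n+1,(m)}$, split $\mathcal N=f+B\phi$, then pair with $\delta^{m+1}$ and divide by $\beta_k\Delta t$. Your coefficient $\beta_k a_{0,k}+\beta_k B\Delta t$ is the correct one; the statement drops a factor $\beta_k$, and so does the paper's own proof, which writes $-a_{0,k}\phi^{n+1,(m)}$ in the increment equation. No renormalization of the history term $\mathcal H_k$ can restore that factor, so the identity as printed is valid only for $k=1$. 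Your corrected form is exactly what the subsequent auxiliary-functional lemma and energy-stability theorem use.
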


\begin{proof}
By the unified formulation of the fixed-point update, we have:
\begin{equation}
\tilde{\phi}^{n+1,(m+1)}=\frac{1}{\beta_k a_{0,k}+A_{k}}\left(\mathcal{H}_{k}+\mathcal{L}_{k}\phi^{n+1,(m)}+\beta_{k}\Delta t\mathcal{N}(\phi^{n+1,(m)})\right).
\end{equation}
We have $A_{k}=\frac{4\beta_{k}\alpha\Delta t}{h^{2}}+2\beta_{k}B\Delta t$. The diffusion term $\mathcal{L}_k$ is defined via the neighbor-sum operator.
Substituting $\mathcal{N}(\phi)=f(\phi)+B\phi$ and moving all terms involving $\phi^{n+1,(m)}$ to the right hand side, we rewrite the update equation in terms of $\delta^{m+1}$:
\begin{align*}
\left(\beta_k a_{0,k}+A_k\right)\delta^{m+1}=\mathcal{H}_{k}-a_{0,k}\phi^{n+1,(m)}+\beta_{k}\alpha\Delta t\Delta_{h}\phi^{n+1,(m)}
-\beta_{k}B\Delta t\phi^{n+1,(m)}+\beta_{k}\Delta t f(\phi^{n+1,(m)}).
\end{align*}
Taking the discrete $l^2$ inner product of both sides with $\delta^{m+1}$ and dividing by $\beta_k \Delta t$ directly yields the stated identity.
\end{proof}

\begin{lemma}[Auxiliary Quadratic Functional]
Define the unified auxiliary quadratic functional as:
\begin{equation}
\mathbb{H}_{k}(\phi):=\frac{ a_{0,k}+B\Delta t}{2\Delta t}\left\|\phi-\frac{\mathcal{H}_{k}}{\beta_k a_{0,k}+\beta_{k}B\Delta t}\right\|_{2}^{2}.
\end{equation}
The difference of this functional between two successive iterations evaluated at $\tilde{\phi}^{n+1,(m+1)}$ and $\phi^{n+1,(m)}$ is exactly:
\begin{equation}
\mathbb{H}_{k}(\tilde{\phi}^{n+1,(m+1)})-\mathbb{H}_{k}(\phi^{n+1,(m)})=\frac{1}{\beta_k \Delta t}\langle(\beta_k a_{0,k}+\beta_k B\Delta t)\phi^{n+1,(m)}-\mathcal{H}_{k},\delta^{m+1}\rangle+\frac{a_{0,k}+B\Delta t}{2\Delta t}\|\delta^{m+1}\|_{2}^{2}.
\end{equation}
\end{lemma}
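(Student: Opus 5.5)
The identity is a pure algebraic consequence of expanding a squared norm, so I will prove it by a direct polarization argument with no appeal to the scheme itself. To keep the notation short, set
\[
c_k:=\frac{a_{0,k}+B\Delta t}{\Delta t},\qquad
y_k:=\frac{\mathcal H_k}{\beta_k a_{0,k}+\beta_k B\Delta t},
\]
so that $\mathbb H_k(\phi)=\frac{c_k}{2}\|\phi-y_k\|_2^2$. Here $\mathcal H_k=\mathcal H_k(\phi^n,\dots,\phi^{n-k+1})$ is a fixed grid function, independent of the iteration index $m$, exactly as in the proof of Theorem~\ref{thm:unified_contractivity}. The only property I will use is that $y_k$ is the same vector at both arguments.

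First, I write $\tilde\phi^{n+1,(m+1)}=\phi^{n+1,(m)}+\delta^{m+1}$, using the definition of $\delta^{m+1}$ from the previous lemma. Second, I expand $\|x+\delta-y_k\|_2^2-\|x-y_k\|_2^2=2(x-y_k,\delta)+\|\delta\|_2^2$ with $x=\phi^{n+1,(m)}$ and $\delta=\delta^{m+1}$. This uses only bilinearity and symmetry of the discrete $\ell^2$ inner product. Multiplying by $c_k/2$ gives
\[
\mathbb H_k(\tilde\phi^{n+1,(m+1)})-\mathbb H_k(\phi^{n+1,(m)})=c_k\,(\phi^{n+1,(m)}-y_k,\delta^{m+1})+\frac{c_k}{2}\|\delta^{m+1}\|_2^2 .
\]

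The last step is to check that the coefficient of the linear term matches the statement. Since $c_k\,y_k=\frac{\mathcal H_k}{\beta_k\Delta t}$ and $c_k=\frac{\beta_k a_{0,k}+\beta_k B\Delta t}{\beta_k\Delta t}$, we obtain
\[
c_k(\phi^{n+1,(m)}-y_k)=\frac{1}{\beta_k\Delta t}\big((\beta_k a_{0,k}+\beta_k B\Delta t)\phi^{n+1,(m)}-\mathcal H_k\big).
\]
This is exactly the first term on the right-hand side of the claim, and the quadratic term $\frac{a_{0,k}+B\Delta t}{2\Delta t}\|\delta^{m+1}\|_2^2$ agrees as well.

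There is no real obstacle here. The only point needing care is bookkeeping of the factor $\beta_k$: the center $y_k$ carries $\beta_k$ in its denominator while the prefactor of $\mathbb H_k$ does not, and these combine correctly only because $c_k=\frac{\beta_k(a_{0,k}+B\Delta t)}{\beta_k\Delta t}$. I also interpret the pairing $\langle\cdot,\cdot\rangle$ in the statement as the same discrete $\ell^2$ inner product $(\cdot,\cdot)$ used in the previous lemma.
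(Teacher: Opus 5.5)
Your proof is correct and takes essentially the same approach as the paper. Both write $\tilde\phi^{n+1,(m+1)}=\phi^{n+1,(m)}+\delta^{m+1}$, expand the difference of squared norms about the fixed center $\mathcal H_k/(\beta_k a_{0,k}+\beta_k B\Delta t)$, and rescale the linear term using $\frac{a_{0,k}+B\Delta t}{\Delta t}=\frac{\beta_k a_{0,k}+\beta_k B\Delta t}{\beta_k\Delta t}$; your explicit $\beta_k$ bookkeeping is just the substitution step the paper leaves implicit.
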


\begin{proof}
Let $C = \frac{a_{0,k}+B\Delta t}{2\Delta t}$ and $V = \frac{\mathcal{H}_k}{\beta_k a_{0,k}+\beta_k B\Delta t}$. Expanding the difference $C\|\tilde{\phi}^{n+1,(m+1)} - V\|_2^2 - C\|\phi^{n+1,(m)} - V\|_2^2$ using the algebraic identity $\|a\|_2^2 - \|b\|_2^2 = \langle a-b, a+b \rangle$ with $a-b = \delta^{m+1}$ and $a+b = 2(\phi^{n+1,(m)} - V) + \delta^{m+1}$, we obtain:
\begin{equation}
C\langle\delta^{m+1}, 2(\phi^{n+1,(m)} - V) + \delta^{m+1}\rangle = 2C\langle\phi^{n+1,(m)} - V, \delta^{m+1}\rangle + C\|\delta^{m+1}\|_2^2.
\end{equation}
Substituting $C$ and $V$ back into the equation completes the proof.
\end{proof}

\begin{theorem}[Unconditional Discrete Energy Stability]
Let $\phi^{n+1,(m)}$ be the $m$-th algorithmic iterate produced by the sBDF$k$ scheme ($k \in \{1,2,3,4\}$). Define the generalized augmented energy as $\mathcal{E}_{k}^{m}:=E_{h}[\phi^{n+1,(m)}]+\mathbb{H}_{k}(\phi^{n+1,(m)})$. For any time step $\Delta t > 0$, $\mathcal{E}_{k}^{m}$ is unconditionally non-increasing with respect to the iteration index $m$, i.e., $\mathcal{E}_{k}^{m+1} \le \mathcal{E}_{k}^{m}$. Upon convergence, the scheme satisfies the global energy decay law 
$$
E_{h}[\phi^{n+1}]+\mathbb{H}_{k}(\phi^{n+1})\le E_{h}[\phi^{n}]+\mathbb{H}_{k}(\phi^{n}).
$$
\end{theorem}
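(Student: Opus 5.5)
The plan is to show that one fixed-point update, followed by the cut-off, does not increase the augmented energy. I would first compare $\mathcal{E}_k$ at $\phi^{n+1,(m)}$ and at the auxiliary iterate $\tilde\phi^{n+1,(m+1)}$. Then I would use Lemma~\ref{lem:cutoff_AC} for the cut-off step, and pass to the limit in $m$ at the end.

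\textbf{Step 1: Taylor-type bounds for the two energy pieces.} Write $\delta:=\delta^{m+1}=\tilde\phi^{n+1,(m+1)}-\phi^{n+1,(m)}$ and $\phi:=\phi^{n+1,(m)}$, so that $\|\phi\|_\infty\le1$.
\begin{itemize}
\item For the gradient part, discrete Green's formula (Lemma~\ref{lem:discrete_green}) gives the exact identity
\[
\frac{\alpha}{2}\|\nabla_h(\phi+\delta)\|^2-\frac{\alpha}{2}\|\nabla_h\phi\|^2=-\alpha(\Delta_h\phi,\delta)+\frac{\alpha}{2}\|\nabla_h\delta\|^2 .
\]
The last term is bounded by $\frac{4\alpha}{h^2}\|\delta\|^2$, using $\|\nabla_h\delta\|^2\le \frac{8}{h^2}\|\delta\|^2$ for the five-point stencil.
\item For the potential part, I would use the pointwise estimate
\[
F(\phi+\delta)-F(\phi)\le -f(\phi)\delta+\frac{L}{2}\delta^2 .
\]
This would be valid because $F''=3s^2-1$, and it needs control of $F''$ on the segment between $\phi$ and $\phi+\delta$.
\end{itemize}

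\textbf{Step 2: Combine with the two preceding lemmas.} The auxiliary quadratic functional lemma gives the increment of $\mathbb{H}_k$. Adding the three increments, the linear terms $-\alpha(\Delta_h\phi,\delta)-(f(\phi),\delta)+\frac{1}{\beta_k\Delta t}\langle(\beta_ka_{0,k}+\beta_kB\Delta t)\phi-\mathcal H_k,\delta\rangle$ appear. By the inner-product identity lemma, these equal $-\frac{\tilde D_k}{\beta_k\Delta t}\|\delta\|^2$, up to reconciling the factors $a_{0,k}$ versus $\beta_k a_{0,k}$ and $B$ versus $2B$. That bookkeeping is to be checked carefully, possibly by rescaling $\mathbb{H}_k$. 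The result should be
\[
\mathcal E(\tilde\phi^{(m+1)})-\mathcal E(\phi^{(m)})\le\Bigl(-\frac{a_{0,k}}{\Delta t}-\frac{4\alpha}{h^2}-2B+\frac{4\alpha}{h^2}+\frac{L}{2}+\frac{a_{0,k}+B\Delta t}{2\Delta t}\Bigr)\|\delta\|^2 .
\]
Nonpositivity of the coefficient requires $\frac{a_{0,k}}{2\Delta t}+\frac{3B}{2}\ge \frac{L}{2}$. So $L\le 3B$ suffices, and this is exactly where the stabilization is used. The dependence on $h$ cancels exactly, which is why the result is unconditional.

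\textbf{Step 3: Cut-off and the limit in $m$.} Applying Lemma~\ref{lem:cutoff_AC} gives $E_h(\phi^{(m+1)})\le E_h(\tilde\phi^{(m+1)})$. For the $\mathbb{H}_k$ part, one would also need $\mathbb{H}_k(\Pi u)\le\mathbb{H}_k(u)$. This holds if the center $V=\mathcal H_k/(\beta_k a_{0,k}+\beta_kB\Delta t)$ lies in $[-1,1]$ pointwise, because projection onto an interval containing $V$ is non-expansive toward $V$. When $V$ falls outside $[-1,1]$, I would fall back on Lipschitz/non-expansiveness arguments. Finally, I would start from $\phi^{n+1,(0)}=\phi^n$, iterate the monotonicity in $m$, and use continuity of $\mathcal E$ together with the convergence from Theorem~\ref{thm:unified_contractivity} to obtain the global decay law.

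\textbf{Main obstacle.} The hardest step is the constant $L$ in Step 1. The auxiliary iterate $\tilde\phi$ may leave $[-1,1]$, and then $F''$ on the segment is not bounded by $2$. I would first try to bound $\|\tilde\phi\|_\infty$ a priori from the update formula: the history coefficients are bounded and $|\mathcal N|\le B$, which gives a $k$-dependent bound such as $3$. That yields $L=3\|\tilde\phi\|_\infty^2-1$, and the remaining question is whether $\beta_k$ and $B$ absorb it. If they do not, I would instead restrict to the case $\phi\in[-1,1]$ and exploit the cubic structure $F(\phi+\delta)-F(\phi)$ exactly, absorbing the positive quartic remainder only through the factor $\frac{a_{0,k}}{2\Delta t}$.
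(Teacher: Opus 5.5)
Your Steps 1--2 reproduce the paper's own computation. The paper takes the Taylor remainder to be $B\|\delta^{m+1}\|^2$ (your $L=2B$) and arrives at $-(\frac{a_{0,k}}{2\Delta t}+\frac B2)\|\delta^{m+1}\|^2$. The $a_{0,k}$ versus $\beta_k a_{0,k}$ mismatch you noticed is a typo in the printed identity lemma. The correct coefficient is $\beta_k(a_{0,k}+B\Delta t)$, as the theorem's proof uses, and with it the cross terms cancel exactly with no rescaling of $\mathbb H_k$.

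However, the two points you flag are left open, and your fallbacks do not close them. The paper's proof also handles both only by assertion: it never controls $\widetilde\phi^{n+1,(m+1)}$ in the Taylor step, and it says ``similarly'' for $\mathbb H_k$. Write $\Pi:=\Pi_{[-1,1]}$.

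\emph{The cut-off step for $\mathbb H_k$.} The center $V=\mathcal H_k/(\beta_k(a_{0,k}+B\Delta t))$ is an extrapolation of the history: its weights sum to one and alternate in sign. For $k=2$, $V=\phi^n+\frac{1+2B\Delta t}{3+2B\Delta t}(\phi^n-\phi^{n-1})$, so $V_i>1$ wherever $\phi^n_i=1>\phi^{n-1}_i$. If in addition $\phi^{n+1,(m)}=1$ at node $i$ and its neighbors, a direct computation gives $1<\widetilde\phi^{n+1,(m+1)}_i<V_i$. The cut-off then strictly increases the contribution of node $i$ to $\mathbb H_k$. Non-expansiveness only yields $|\Pi u_i-\Pi V_i|\le|u_i-V_i|$, which says nothing about $|\Pi u_i-V_i|$.

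\emph{The constant $L$.} The triangle-inequality bound from the update formula is $\|\widetilde\phi^{n+1,(m+1)}\|_\infty\le M:=2^{k-1}$ in the worst case, approached when $B\Delta t$ dominates. Then $L=3M^2-1$ is $11,47,191$ for $k=2,3,4$, against the large-$\Delta t$ budget $3B=6$ for $B=2$. Absorbing the excess into $\frac{a_{0,k}}{2\Delta t}$ is a step-size restriction, which is exactly what ``unconditional'' excludes. Your route closes only for $k=1$, where $\widetilde\phi$ stays in $[-1,1]$ and there is no cut-off.

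\emph{The missing idea} is to compare $\phi^{n+1,(m+1)}$ with $\phi^{n+1,(m)}$ directly, never passing through the uncut iterate. The vector form of the corrected identity lemma says precisely that $\widetilde\phi^{n+1,(m+1)}=\phi^{n+1,(m)}-\lambda^{-1}\nabla\mathcal E_k(\phi^{n+1,(m)})$. Here $\lambda=\frac{a_{0,k}}{\Delta t}+\frac{4\alpha}{h^2}+2B$, and $\nabla\mathcal E_k(\phi)=-\alpha\Delta_h\phi-f(\phi)+\frac{a_{0,k}+B\Delta t}{\Delta t}(\phi-V)$ is the discrete $\ell^2$-gradient. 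So the algorithm is projected gradient descent on $\mathcal E_k$ over the convex set $K$ of grid functions with values in $[-1,1]$.

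Set $d:=\phi^{n+1,(m+1)}-\phi^{n+1,(m)}$. The projection inequality $(\widetilde\phi^{n+1,(m+1)}-\phi^{n+1,(m+1)},\,\phi^{n+1,(m)}-\phi^{n+1,(m+1)})\le0$ holds since $\phi^{n+1,(m)}\in K$, and it gives $(\nabla\mathcal E_k(\phi^{n+1,(m)}),d)\le-\lambda\|d\|^2$. Your Step 1 expansion applied to $d$ now involves only values in $[-1,1]$, where $F''=-f'\le B$. Hence
\[\mathcal E_k^{m+1}-\mathcal E_k^{m}\le(\nabla\mathcal E_k(\phi^{n+1,(m)}),d)+\Bigl(\frac{4\alpha}{h^2}+\frac B2+\frac{a_{0,k}+B\Delta t}{2\Delta t}\Bigr)\|d\|^2\le-\Bigl(\frac{a_{0,k}}{2\Delta t}+B\Bigr)\|d\|^2\]
for every $k$, $\Delta t$ and $h$. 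This needs no bound on $\widetilde\phi$ and no monotonicity of $\mathbb H_k$ under the cut-off. Summing from $\phi^{n+1,(0)}=\phi^n$ to the stopping index $m_\ast$ gives the stated decay law directly; no limit in $m$ is needed.
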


\begin{proof}
For $k \ge 2$, the algorithmic iterate is obtained via the projection operator $\phi^{n+1,(m+1)} = \Pi_{[-1,1]}(\tilde{\phi}^{n+1,(m+1)})$. By Lemma 6.2, the pointwise cut-off  ensures $E_{h}[\phi^{n+1,(m+1)}]\le E_{h}[\tilde{\phi}^{n+1,(m+1)}]$, and similarly $\mathbb{H}_{k}(\phi^{n+1,(m+1)})\le \mathbb{H}_{k}(\tilde{\phi}^{n+1,(m+1)})$. Thus, bounding the energy difference for the pre-cut-off update $\tilde{\phi}$ provides a sufficient upper bound.

Applying the Taylor expansion on the nonlinear potential and discrete Green's identity, the difference in the discrete energy is bounded by:
\begin{align*}
E_{h}[\tilde{\phi}^{n+1,(m+1)}]-E_{h}[\phi^{n+1,(m)}]\le & -\langle f(\phi^{n+1,(m)}),\delta^{m+1}\rangle-\alpha\langle\delta^{m+1},\Delta_{h}\phi^{n+1,(m)}\rangle\\
&+B\|\delta^{m+1}\|_{2}^{2}+\frac{\alpha}{2}\|\nabla_{h}\delta^{m+1}\|_{2}^{2}.
\end{align*}

Substituting the result of Lemma 6.3 to eliminate the terms involving $f(\phi)$ and $\Delta_h \phi$, we obtain:
\begin{align*}
E_{h}[\tilde{\phi}^{n+1,(m+1)}]-E_{h}[\phi^{n+1,(m)}]\le & - \left(\frac{\tilde{D}_{k}}{\beta_{k}\Delta t}-B\right)\|\delta^{m+1}\|_{2}^{2}+\frac{\alpha}{2}\|\nabla_{h}\delta^{m+1}\|_{2}^{2}\\&
+\frac{1}{\beta_{k}\Delta t}\left(\mathcal{H}_{k}-(\beta_k a_{0,k}
+\beta_{k}B\Delta t)\phi^{n+1,(m)},\delta^{m+1}\right).
\end{align*}

By the standard property of finite differences, $\frac{\alpha}{2}\|\nabla_{h}\delta^{m+1}\|_{2}^{2}\le\frac{4\alpha}{h^{2}}\|\delta^{m+1}\|_{2}^{2}$. Combining the coefficients for $\|\delta^{m+1}\|_2^2$ simplifies the expression: $\frac{\tilde{D}_{k}}{\beta_{k}\Delta t}-B-\frac{4\alpha}{h^{2}} = \frac{a_{0,k}}{\Delta t}+B$. Therefore:
\begin{equation}
E_{h}[\tilde{\phi}^{n+1,(m+1)}]-E_{h}[\phi^{n+1,(m)}]\le-\left(\frac{a_{0,k}}{\Delta t}+B\right)\|\delta^{m+1}\|_{2}^{2}+\frac{1}{\beta_{k}\Delta t}\langle\mathcal{H}_{k}-(\beta_k a_{0,k}+\beta_{k}B\Delta t)\phi^{n+1,(m)},\delta^{m+1}\rangle.
\end{equation}

Adding the auxiliary functional difference from Lemma 6.4, the linear inner product cross-terms perfectly cancel out, yielding:
\begin{equation}
\mathcal{E}_{k}^{m+1}-\mathcal{E}_{k}^{m}\le-\left(\frac{a_{0,k}}{\Delta t}+B-\frac{a_{0,k}+B\Delta t}{2\Delta t}\right)\|\delta^{m+1}\|_{2}^{2}=-\left(\frac{a_{0,k}}{2\Delta t}+\frac{B}{2}\right)\|\delta^{m+1}\|_{2}^{2}.
\end{equation}

Since $B > 0$, $\Delta t > 0$, and the sBDF coefficient $a_{0,k} > 0$ for all $k \in \{1,2,3,4\}$, we have $\frac{a_{0,k}}{2\Delta t}+\frac{B}{2} > 0$. Hence, $\mathcal{E}_{k}^{m+1} \le \mathcal{E}_{k}^{m}$ unconditionally. Letting $m \rightarrow \infty$, the global discrete energy dissipation law holds.
\end{proof}


\section{Numerical experiments} \label{sec:numerical experiments}

In this section, we present two numerical examples to assess the performance of the proposed
schemes. The first example considers the Allen--Cahn equation posed on $\Omega=[0,1]^2$ with
mixed boundary conditions. This test is used to verify the temporal
convergence orders of the proposed methods and to demonstrate their ability to preserve the MBP
and maintain unconditional energy stability. We also compare the computational efficiency of the
proposed sBDF schemes with first- and second-order ETD schemes. To further examine robustness
in a more challenging setting, we consider a second example based on a prostate cancer growth
model.

\medskip
\noindent\textbf{Example 7.1 (Allen--Cahn equation).}\label{ex:ac}
We consider
\begin{equation}\label{eq:ac_case}
\frac{\partial u}{\partial t}=\varepsilon^{2}\Delta u + u - u^{3},
\qquad \boldsymbol{x}\in\Omega,\ \ t\in(0,T],
\end{equation}
with the initial condition
\begin{equation}\label{eq:7.2}
u_0(x,y)=0.05\big(1-\cos(2\pi x)\big)\cos(2\pi y),
\qquad (x,y)\in\overline{\Omega},
\end{equation}
and mixed boundary conditions consisting of a homogeneous Dirichlet condition on the left edge
of $\Omega$ and homogeneous Neumann conditions on the remaining three edges.

We set $\varepsilon=0.01$ and choose $B=2$ to satisfy the stability requirement $B\ge \|f'\|$.
Table~~\ref{tab:FPI1234-temporal} reports the numerical results at $T=1$, where the $L^{2}$-errors are computed against a
reference solution obtained with a sufficiently small time step to ensure high temporal accuracy.
Tables~\ref{tab:first_order_compare}--\ref{tab:second_order_compare} summarize the numerical errors, observed convergence rates, and computational costs
of the first- and second-order ETD and sBDF schemes. Furthermore, the MBP preservation of the
proposed sBDF schemes is examined over the long-time interval $t\in[0,60]$; see Fig.~\ref{fig:mbp_preservation}.

\begin{table}[htbp]
\centering
\setlength{\tabcolsep}{5.5pt}  
\renewcommand{\arraystretch}{1.15}  
\caption{$L^2$ temporal errors and observed convergence rates for the sBDF schemes applied to equation \eqref{eq:ac_case}
on a fixed $2048\times 2048$ spatial grid.}
\label{tab:FPI1234-temporal}
\begin{tabular}{c|c|cc|cc|cc|cc}
\toprule
\textbf{} & $\Delta t$ & sBDF1 & Order & sBDF2 & Order & sBDF3 & Order & sBDF4 & Order \\
\midrule
\multirow{5}{*}{\rotatebox{90}{Temporal}} 
& 0.1     & 1.37e-2 & --   & 3.50e-3 & --   & 1.21e-4 & --   & 6.92e-6  & --   \\
& 0.05    & 7.60e-3 & 0.85 & 9.83e-4 & 1.83 & 1.78e-5 & 2.76 & 5.63e-7  & 3.62 \\
& 0.025   & 4.00e-3 & 0.92 & 2.57e-4 & 1.93 & 2.40e-6 & 2.89 & 3.96e-8  & 3.83 \\
& 0.0125  & 2.00e-3 & 1.00 & 6.57e-5 & 1.96 & 3.12e-7 & 2.95 & 2.60e-9  & 3.93 \\
& 0.00625 & 1.00e-3 & 1.00 & 1.66e-5 & 1.98 & 3.97e-8 & 2.97 & 1.57e-10 & 4.05 \\
\bottomrule
\end{tabular}
\end{table}

\begin{figure}[t]
    \centering
    \includegraphics[width=0.36\textwidth]{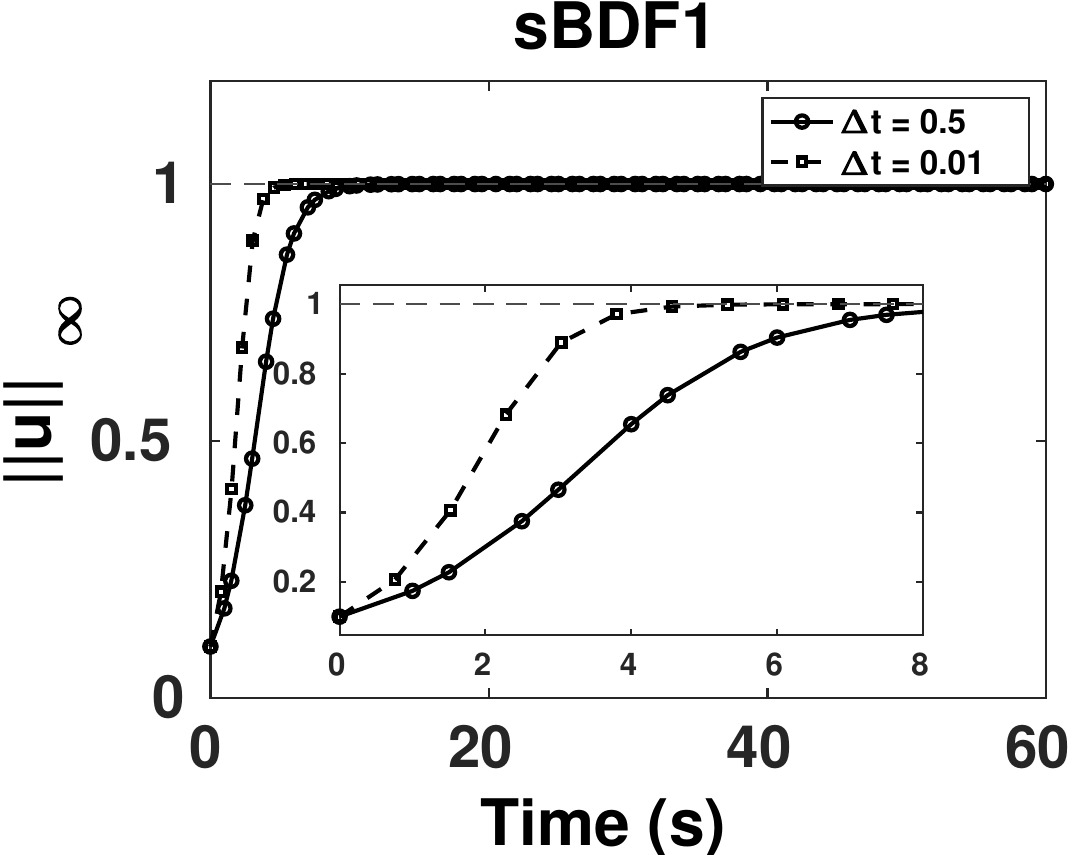}
    \hspace{0.05\textwidth}
    \includegraphics[width=0.36\textwidth]{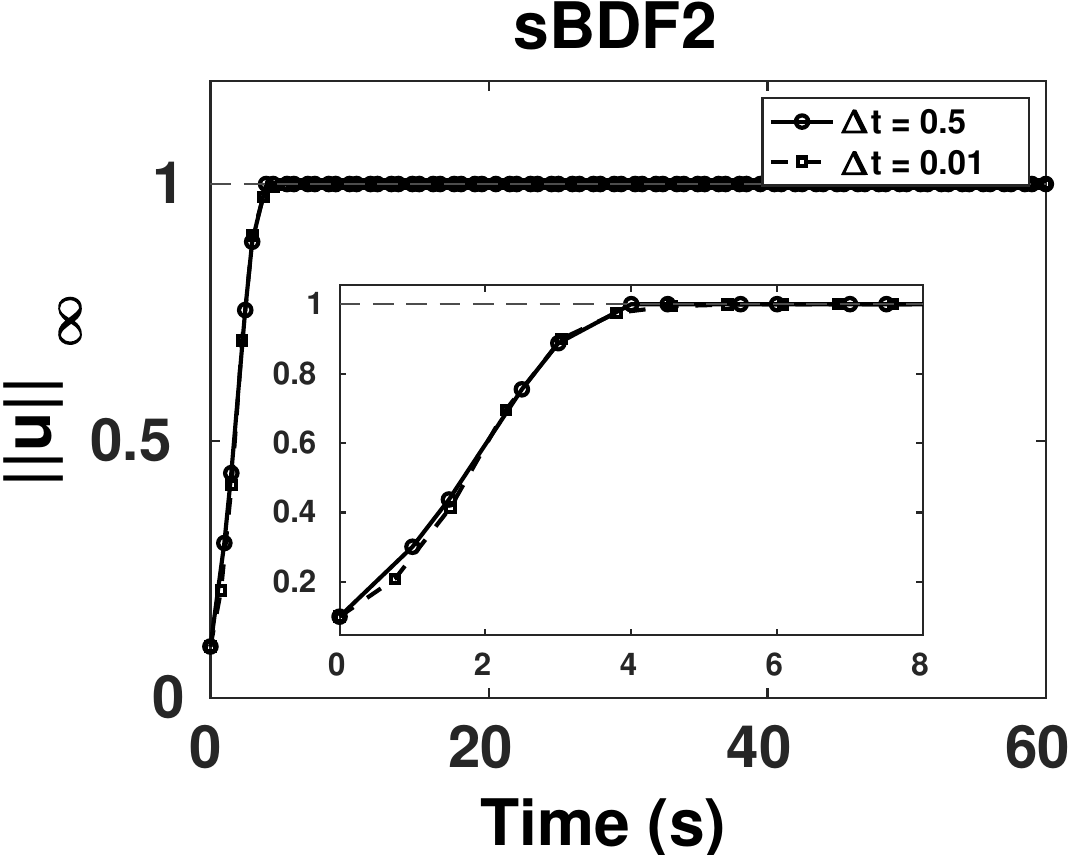}
    
    \vspace{0.02\textwidth}
    
    \includegraphics[width=0.36\textwidth]{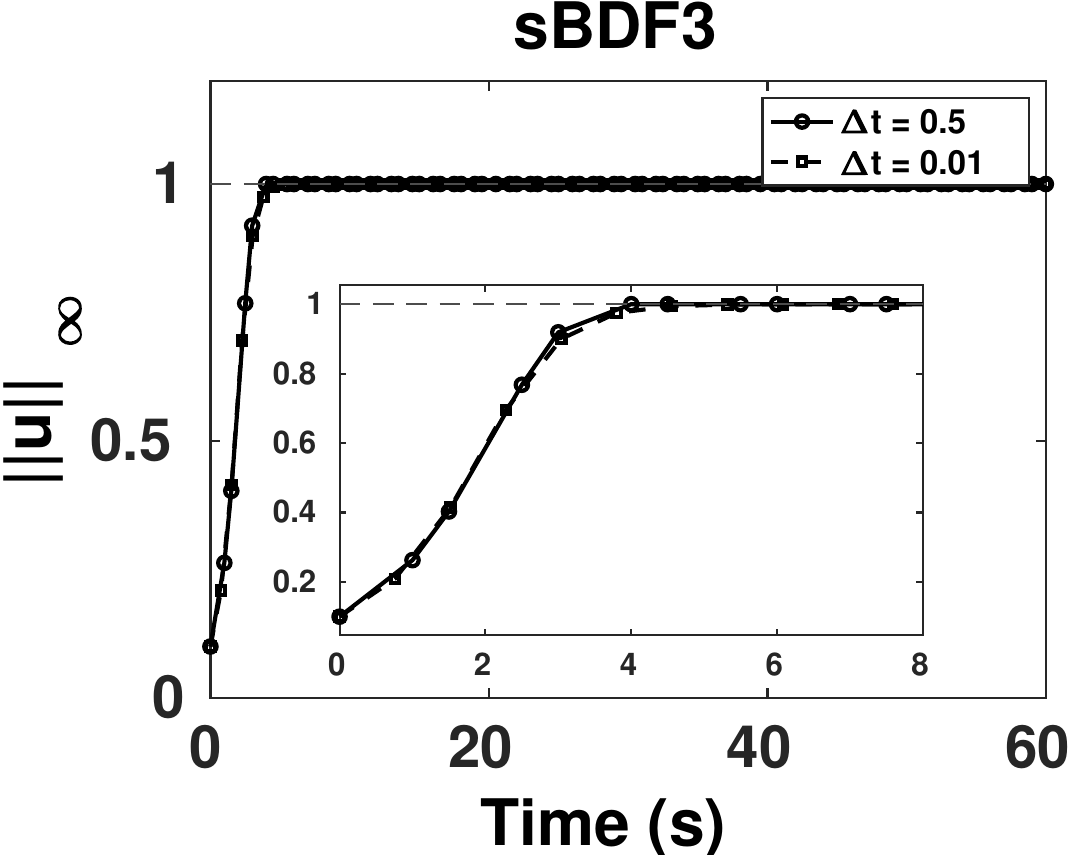}
    \hspace{0.05\textwidth}
    \includegraphics[width=0.36\textwidth]{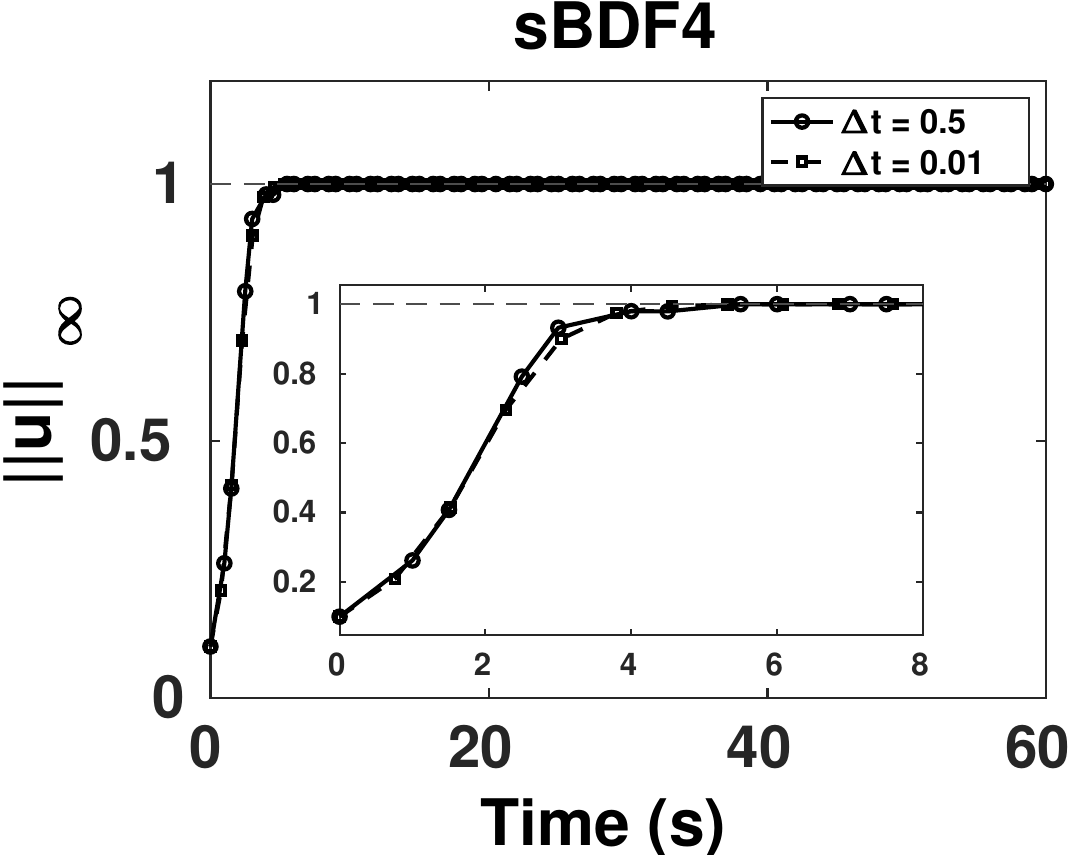}
    \caption{Evolution of the maximum norms of the numerical solutions in Example~7.1 on a fixed $512\times512$ grid.}
    \label{fig:mbp_preservation}
\end{figure}

\begin{figure}[t]
    \centering
    \includegraphics[width=0.36\textwidth]{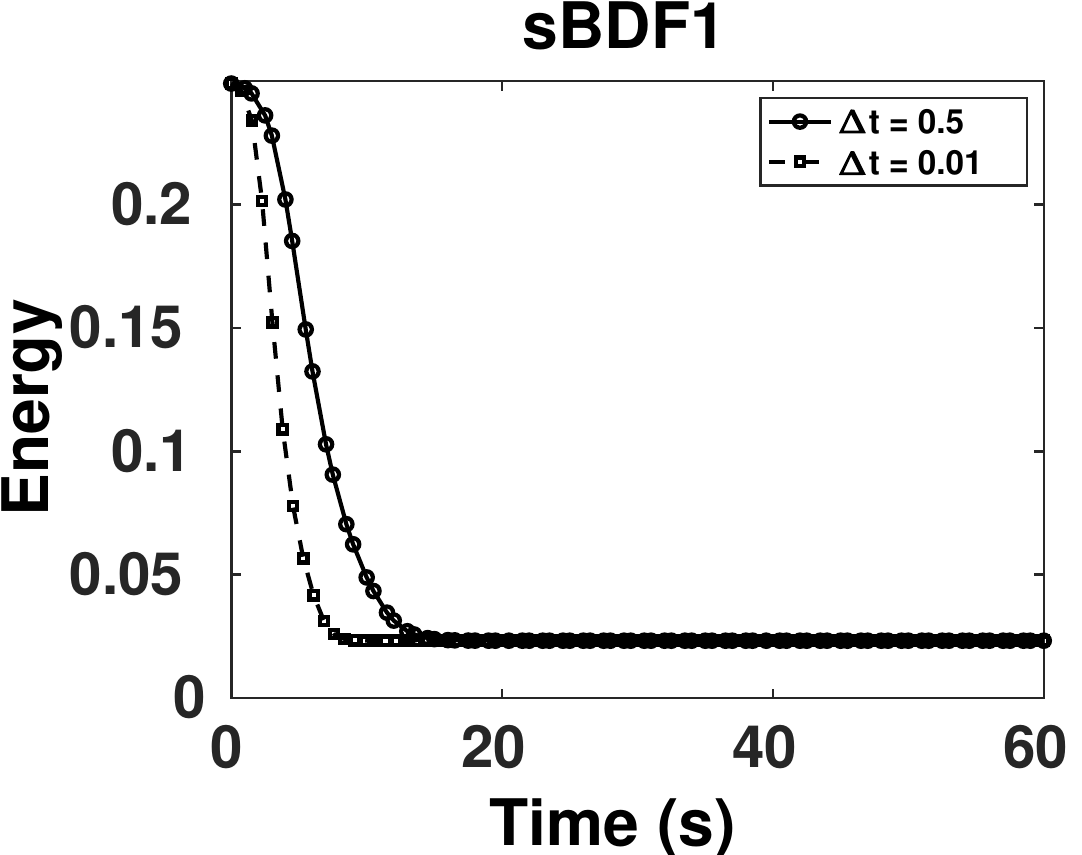}
    \hspace{0.05\textwidth}
    \includegraphics[width=0.36\textwidth]{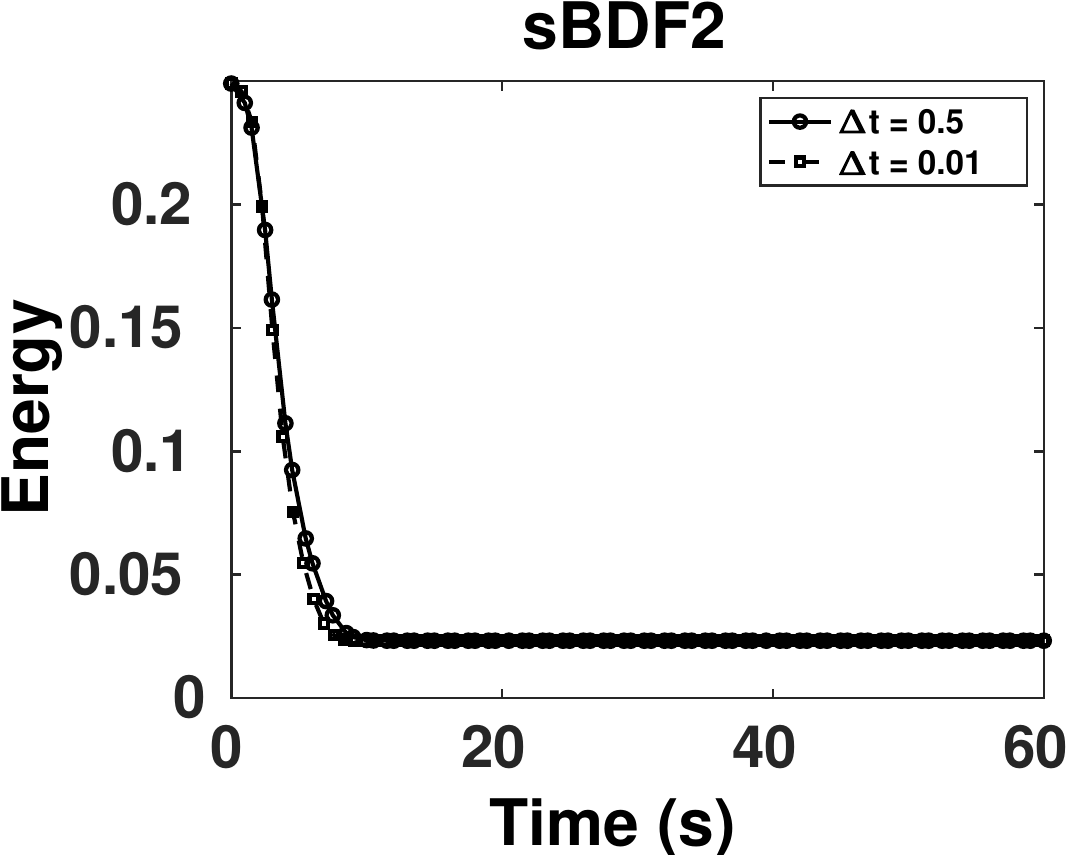}
    
    \vspace{0.02\textwidth}
    
    \includegraphics[width=0.36\textwidth]{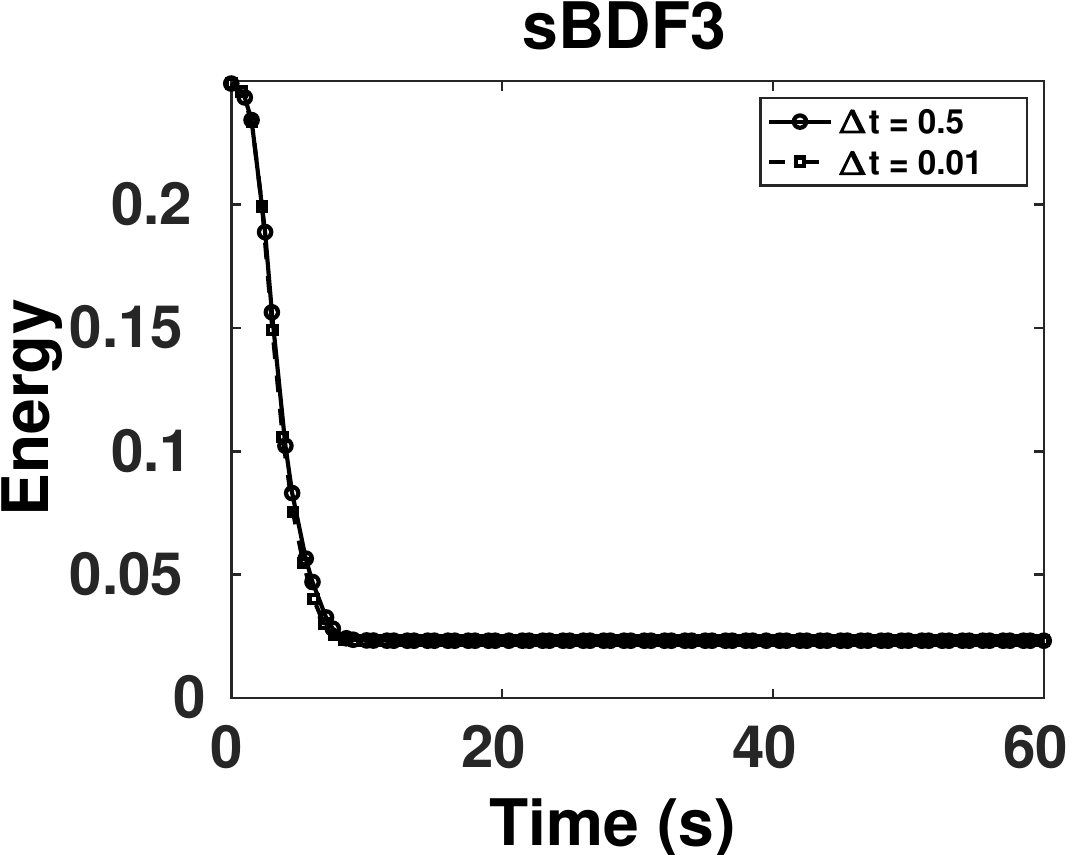}
    \hspace{0.05\textwidth}
    \includegraphics[width=0.36\textwidth]{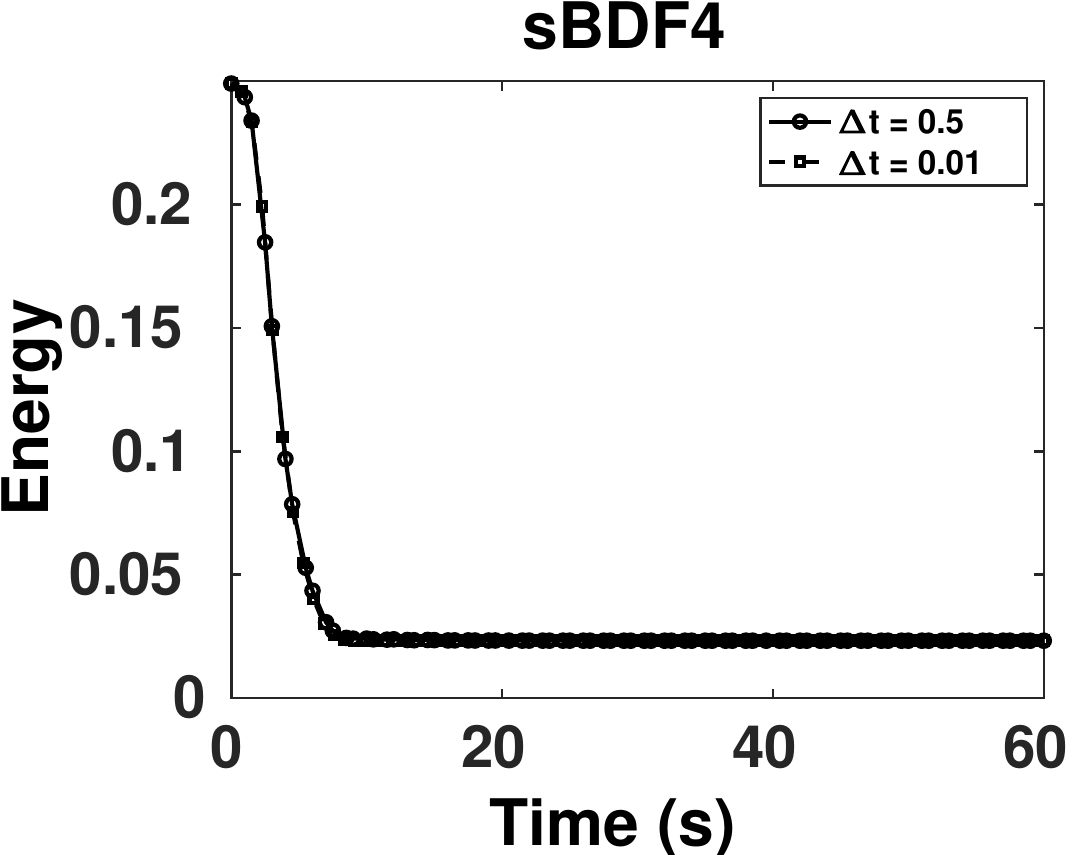}
    \caption{Energy evolution of the numerical solutions in Example~7.1 on a fixed $512\times512$ spatial grid.}
    \label{fig:energy_dissipation}
\end{figure}

\begin{table}[ht]
  \centering
  \small
  \caption{\(L^{2}\) numerical errors, observed convergence rates, and computational times for the ETD1 and sBDF1 schemes applied to equation \eqref{eq:ac_case} on a fixed $512 \times 512$ spatial grid.}
  \label{tab:first_order_compare}
  \begin{tabular}{c
                  S[table-format=1.2e-1] S[table-format=1.2] S[table-format=3.2]
                  S[table-format=1.2e-1] S[table-format=1.2] S[table-format=3.2]}
    \toprule
    {$\Delta t$} 
      & \multicolumn{3}{c}{ETD1} 
      & \multicolumn{3}{c}{sBDF1} \\
    \cmidrule(lr){2-4} \cmidrule(lr){5-7}
      & {Error} & {Order} & {Time [s]} 
      & {Error} & {Order} & {Time [s]} \\
    \midrule
    0.1     & 1.35e-2 &      & 9.26  & 1.36e-2 &      & 6.01 \\
    0.05    & 7.24e-3 & 0.89 & 18.56 & 7.39e-3 & 0.88 & 9.88 \\
    0.025   & 3.71e-3 & 0.97 & 37.06 & 3.81e-3 & 0.95 & 11.07 \\
    0.0125  & 1.82e-3 & 1.03 & 74.16 & 1.88e-3 & 1.02 & 12.99 \\
    0.00625 & 8.36e-4 & 1.12 & 148.51& 8.70e-4 & 1.11 & 16.19 \\
    \bottomrule
  \end{tabular}
\end{table}

\begin{table}[ht]
  \centering
  \small
  \caption{\(L^{2}\) numerical errors, observed convergence rates, and computational times for the ETDRK2 and sBDF2 schemes applied to equation \eqref{eq:ac_case} on a fixed $512 \times 512$ spatial grid.}
  \label{tab:second_order_compare}
  \begin{tabular}{c
                  S[table-format=1.2e-1] S[table-format=1.2] S[table-format=3.2]
                  S[table-format=1.2e-1] S[table-format=1.2] S[table-format=3.2]}
    \toprule
    {$\Delta t$} 
      & \multicolumn{3}{c}{ETDRK2} 
      & \multicolumn{3}{c}{sBDF2} \\
    \cmidrule(lr){2-4} \cmidrule(lr){5-7}
      & {Error} & {Order} & {Time [s]} 
      & {Error} & {Order} & {Time [s]} \\
    \midrule
    0.1     & 1.75e-3 &     & 14.62 & 4.42e-4 &      & 9.30 \\
    0.05    & 4.85e-4 & 1.85& 29.21 & 1.77e-4 & 1.32 & 10.61 \\
    0.025   & 1.28e-4 & 1.93& 57.18 & 5.26e-5 & 1.75 & 11.89 \\
    0.0125  & 3.27e-5 & 1.97&115.69 & 1.42e-5 & 1.89 & 14.20 \\
    0.00625 & 8.21e-6 & 1.99&228.84 & 3.60e-6 & 1.97 & 18.50 \\
    \bottomrule
  \end{tabular}
\end{table}

\medskip
\noindent\textbf{Example 7.2 (Prostate cancer model \cite{Colli2020}).}\label{ex:pca}
We next consider a prostate cancer growth model that describes the spatiotemporal evolution of
the tumor order parameter $\phi$, nutrient concentration $\sigma$, and PSA level $p$ through the
coupled reaction--diffusion system
\begin{align}
\phi_t &= \lambda \Delta \phi - 2M\,\phi(1-\phi)\Big(1-2\phi-3\big(m(\sigma)-m_{\mathrm{ref}}u\big)\Big),
\label{eq:7.3}\\
\sigma_t &= \eta \Delta \sigma + S_h(1-\phi) + (S_c-s)\phi - \big(\gamma_h(1-\phi)+\gamma_c\phi\big)\sigma,
\label{eq:7.4}\\
p_t &= D\Delta p - \gamma_p p + \alpha_h(1-\phi) + \alpha_c \phi.
\label{eq:7.5}
\end{align}
The initial and boundary conditions are given by
\begin{equation}\label{eq:7.6}
\phi(\cdot,0)=\phi_0,\qquad \sigma(\cdot,0)=\sigma_0,\qquad p(\cdot,0)=p_0 \quad \text{in }\Omega;
\qquad
\phi=0,\ \ \partial_{\boldsymbol n}\sigma=\partial_{\boldsymbol n}p=0 \quad \text{on }\partial\Omega.
\end{equation}

The simulations are performed on a square domain of side length $L_d=3000\,\mu\mathrm{m}$,
uniformly discretized with $N=2048$ grid points in each spatial direction. We use the time step
$\Delta t=0.01$ and integrate up to the final time $T=365$ days. All remaining model parameters
are taken from \cite{Colli2020}.

The initial condition for the tumor phase-field variable $\phi$ is prescribed as
\begin{equation}\label{eq:phi0}
\phi_0(x,y)
=
0.5-0.5\,\tanh\!\left(
10\sqrt{\left(\frac{x-L_d/2}{a}\right)^2+\left(\frac{y-L_d/2}{b}\right)^2}-1
\right),
\end{equation}
where $a=150\,\mu\mathrm{m}$ and $b=200\,\mu\mathrm{m}$.
The initial nutrient and PSA concentrations are defined by
\begin{equation}\label{eq:sigma0_p0}
\sigma_0(x,y)=c_\sigma^{0}+c_\sigma^{1}\phi_0(x,y),
\qquad
p_0(x,y)=c_p^{0}+c_p^{1}\phi_0(x,y),
\end{equation}
with
\begin{equation}\label{eq:cp_constants}
c_\sigma^{0}=1~\mathrm{g/L},\quad c_\sigma^{1}=-0.8~\mathrm{g/L},\quad
c_p^{0}=0.0625~\mathrm{ng}\cdot\mathrm{mL}^{-1}\cdot\mathrm{cc}^{-1},\quad
c_p^{1}=0.7975~\mathrm{ng}\cdot\mathrm{mL}^{-1}\cdot\mathrm{cc}^{-1}.
\end{equation}

The simulations investigate both natural tumor growth and tumor responses under therapeutic
protocols. For brevity, we report only the aggressive-tumor case under baseline conditions and
consider three different therapies.

\begin{figure}[H]
  \centering
  \includegraphics[width=0.92\textwidth]{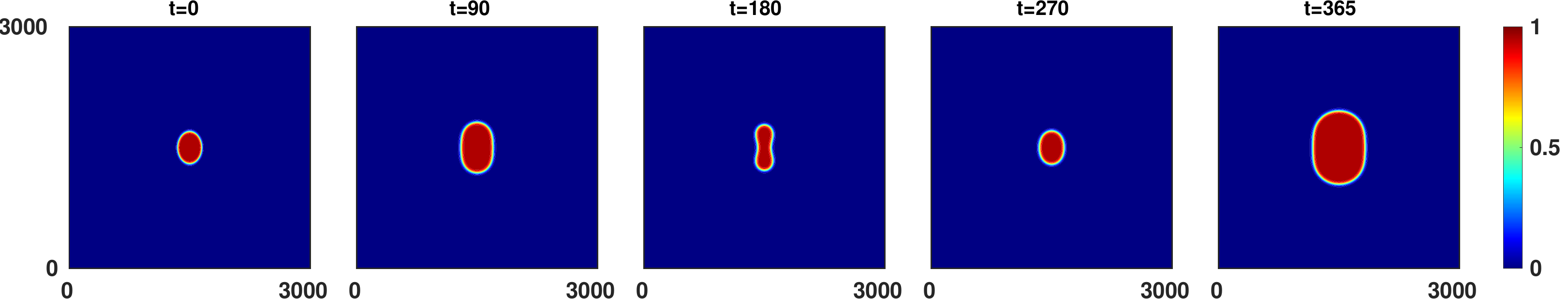}\par\vspace{0.35em}
  \includegraphics[width=0.92\textwidth]{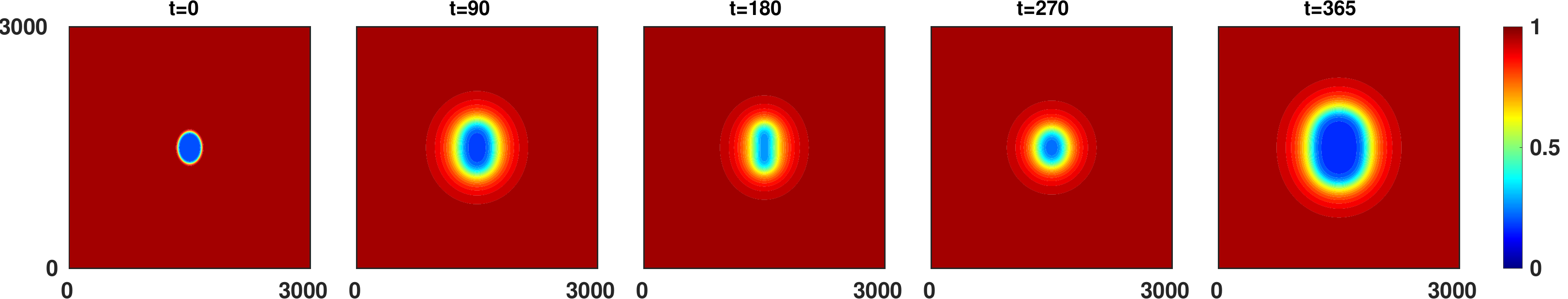}\par\vspace{0.35em}
  \includegraphics[width=0.92\textwidth]{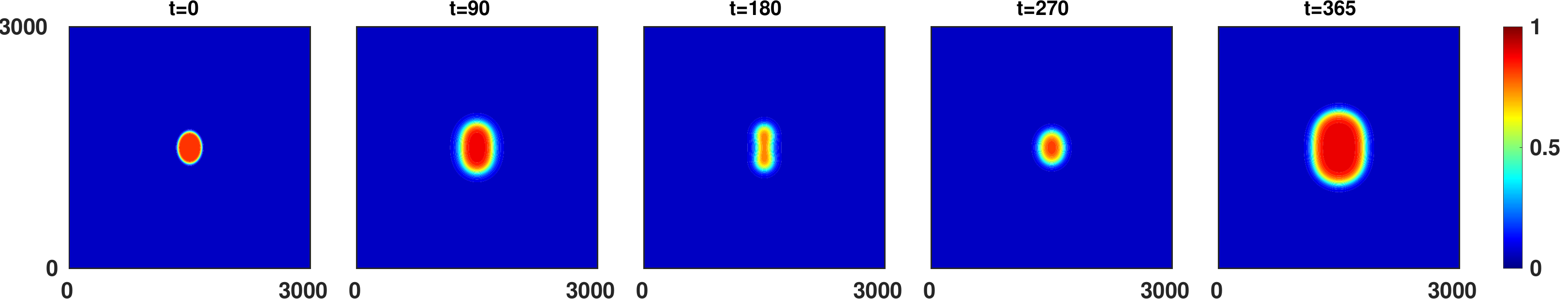}
  \caption{Tumor growth (first row), nutrient distribution (second row), and PSA distribution (third row) for an aggressive tumor under baseline conditions with (I) pure cytotoxic drug therapy, with \(S_c = 2.75\) and \(\gamma_c = 17\).}
  \label{fig:snapshots_three_rows}
\end{figure}

\begin{figure}[H]
  \centering
  \includegraphics[width=0.92\textwidth]{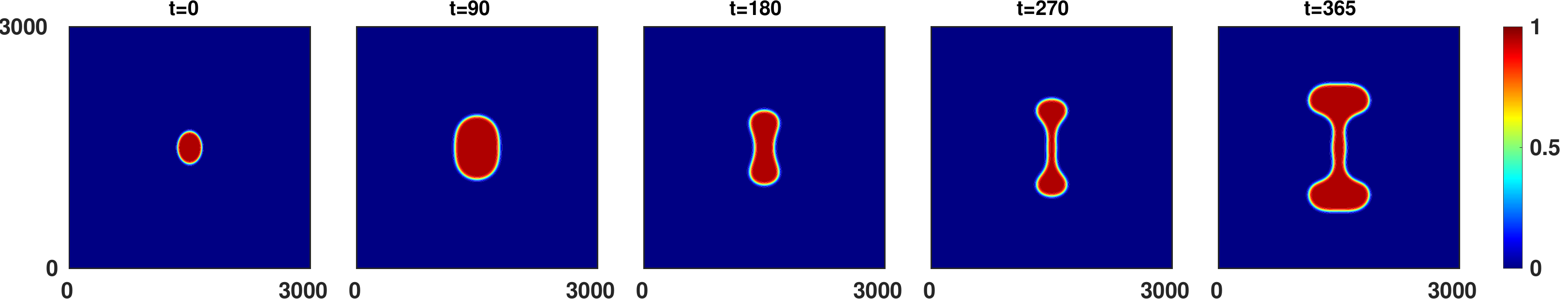}\par\vspace{0.35em}
  \includegraphics[width=0.92\textwidth]{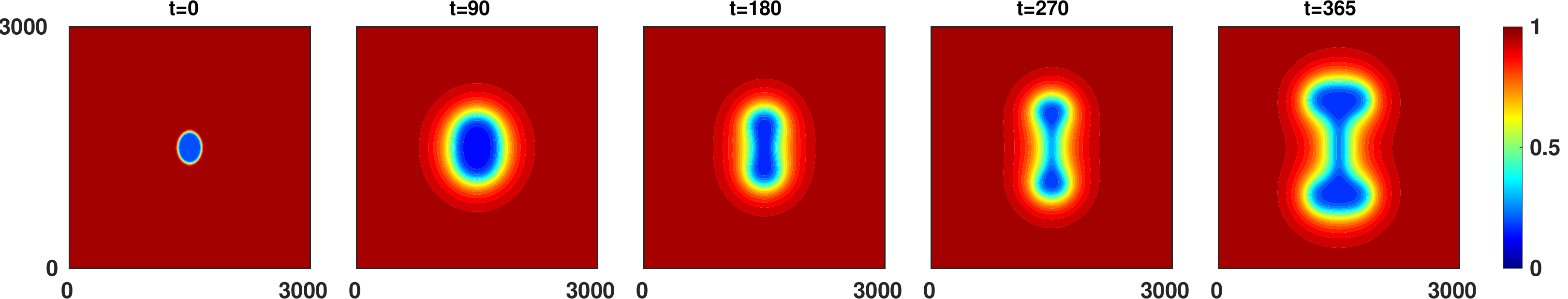}\par\vspace{0.35em}
  \includegraphics[width=0.92\textwidth]{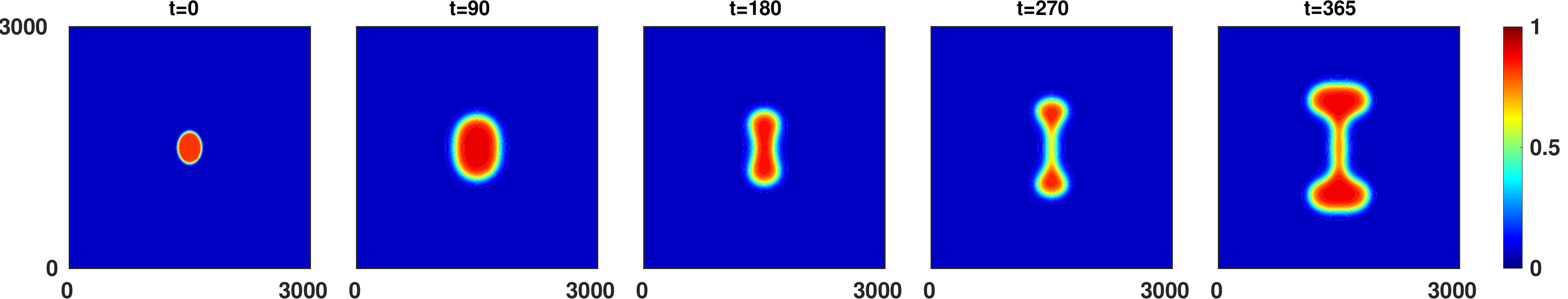}
  \caption{Tumor growth (the first row), Nutrient distribution (the second row) and PSA distribution (the third row) of an aggressive tumor with (II) pure anti-angiogenic drug therapy under baseline, with \(S_c = 2.75\) and \(\gamma_c = 17\).}
  \label{fig:snapshots_three_rows}
\end{figure}

\begin{figure}[H]
  \centering
  \includegraphics[width=0.92\textwidth]{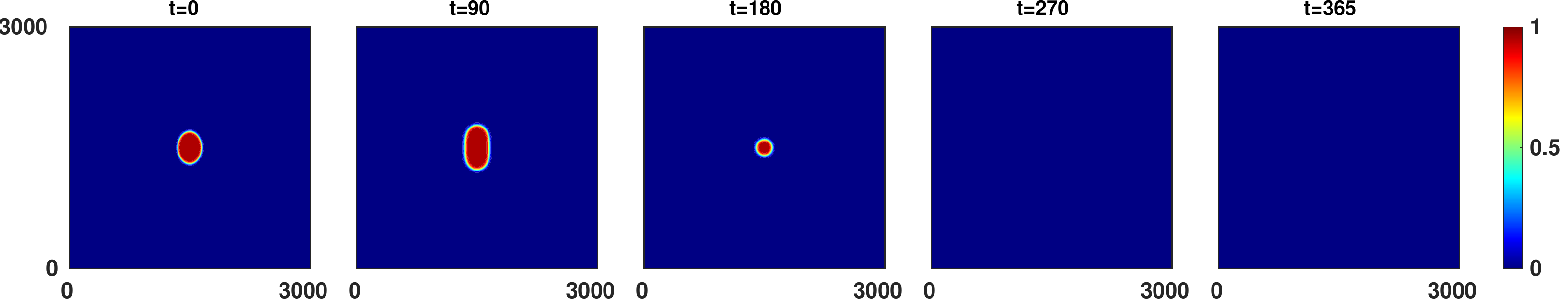}\par\vspace{0.35em}
  \includegraphics[width=0.92\textwidth]{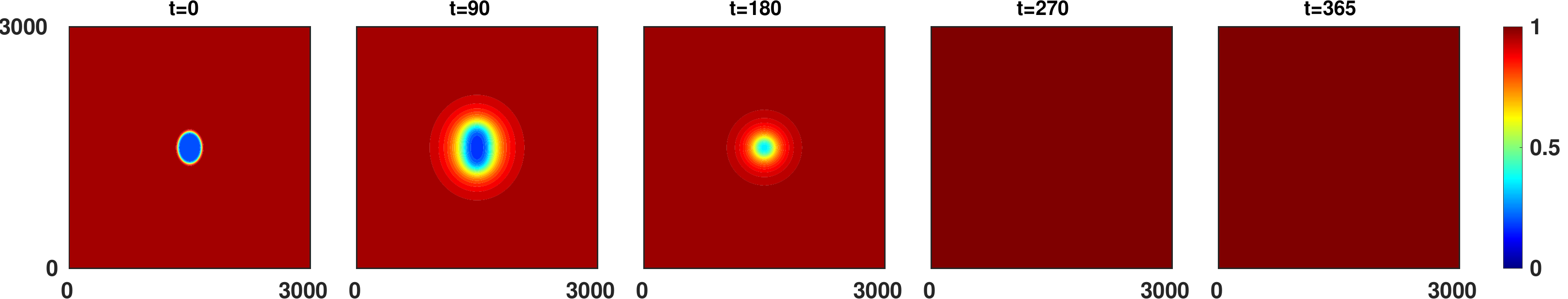}\par\vspace{0.35em}
  \includegraphics[width=0.92\textwidth]{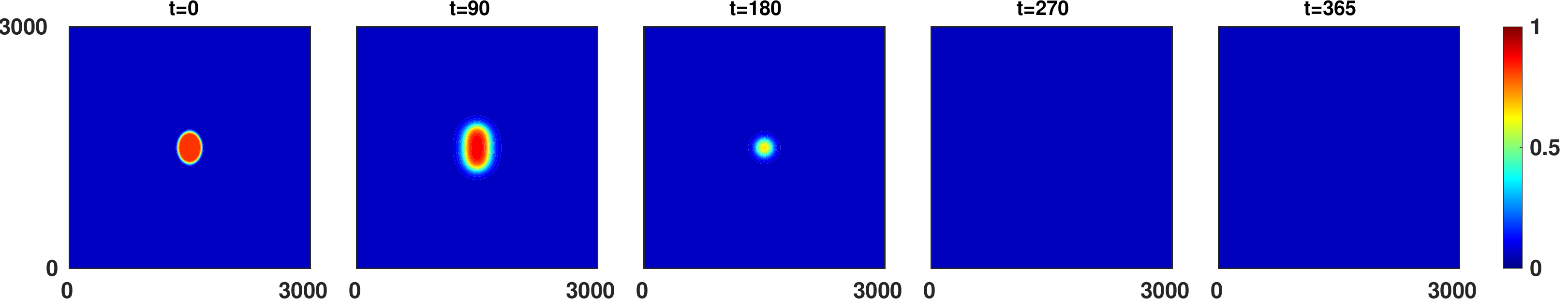}
  \caption{Tumor growth (the first row), Nutrient distribution (the second row) and PSA distribution (the third row) of an aggressive tumor with (III) combined therapy under baseline, with \(S_c = 2.75\) and \(\gamma_c = 17\).}
  \label{fig:snapshots_three_rows}
\end{figure}

\begin{figure}[H]
    \centering
    \includegraphics[width=0.3\textwidth]{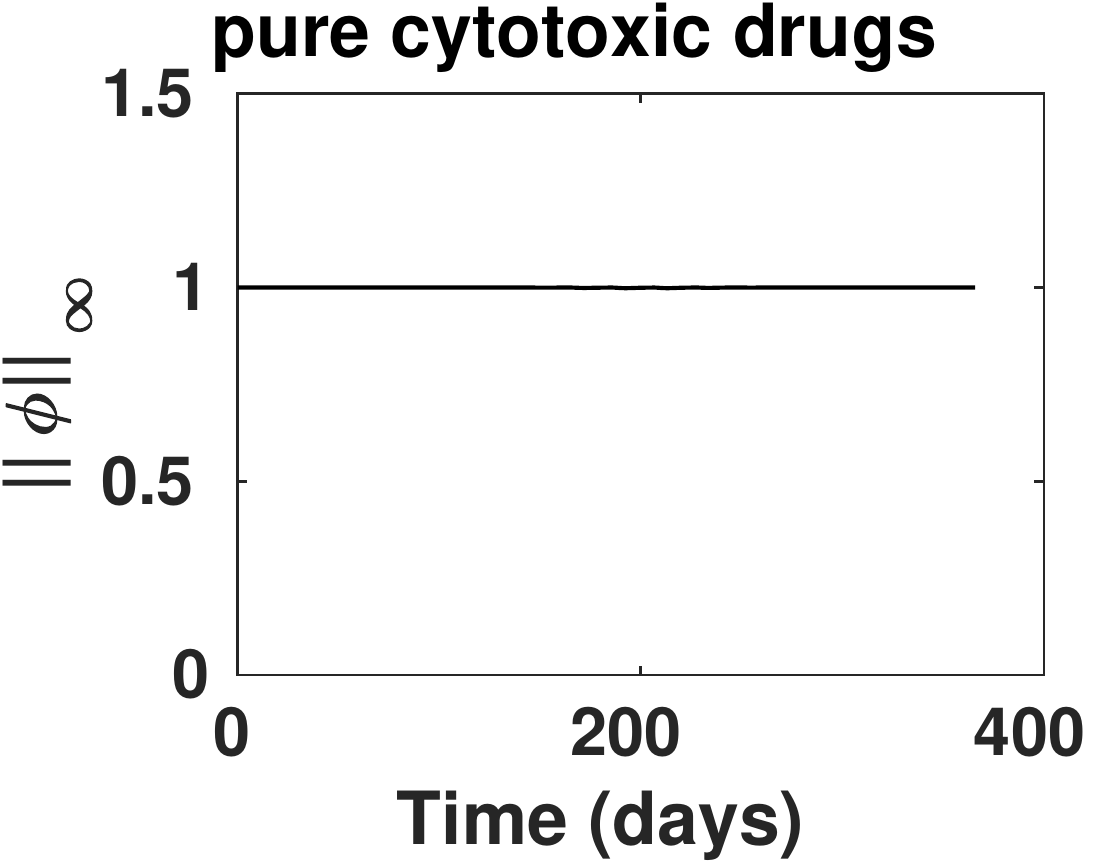}
    \includegraphics[width=0.3\textwidth]{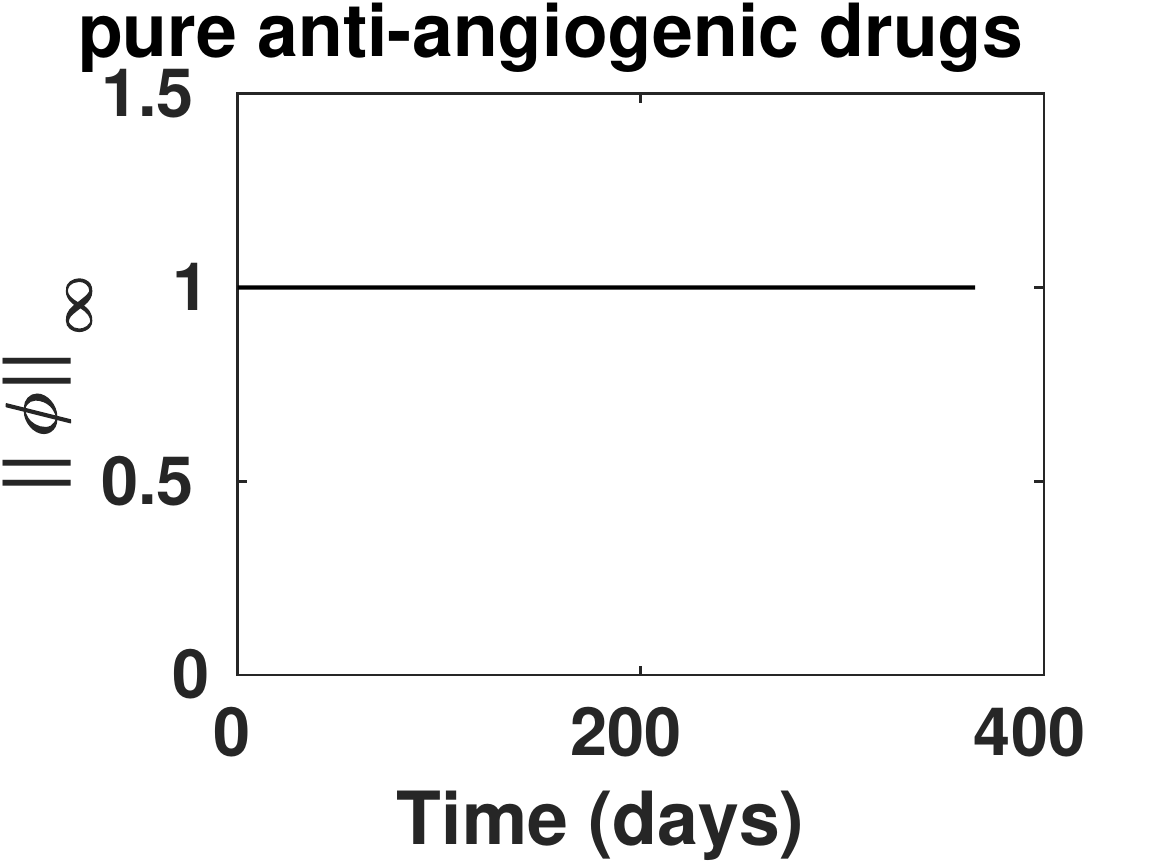}
    \includegraphics[width=0.3\textwidth]{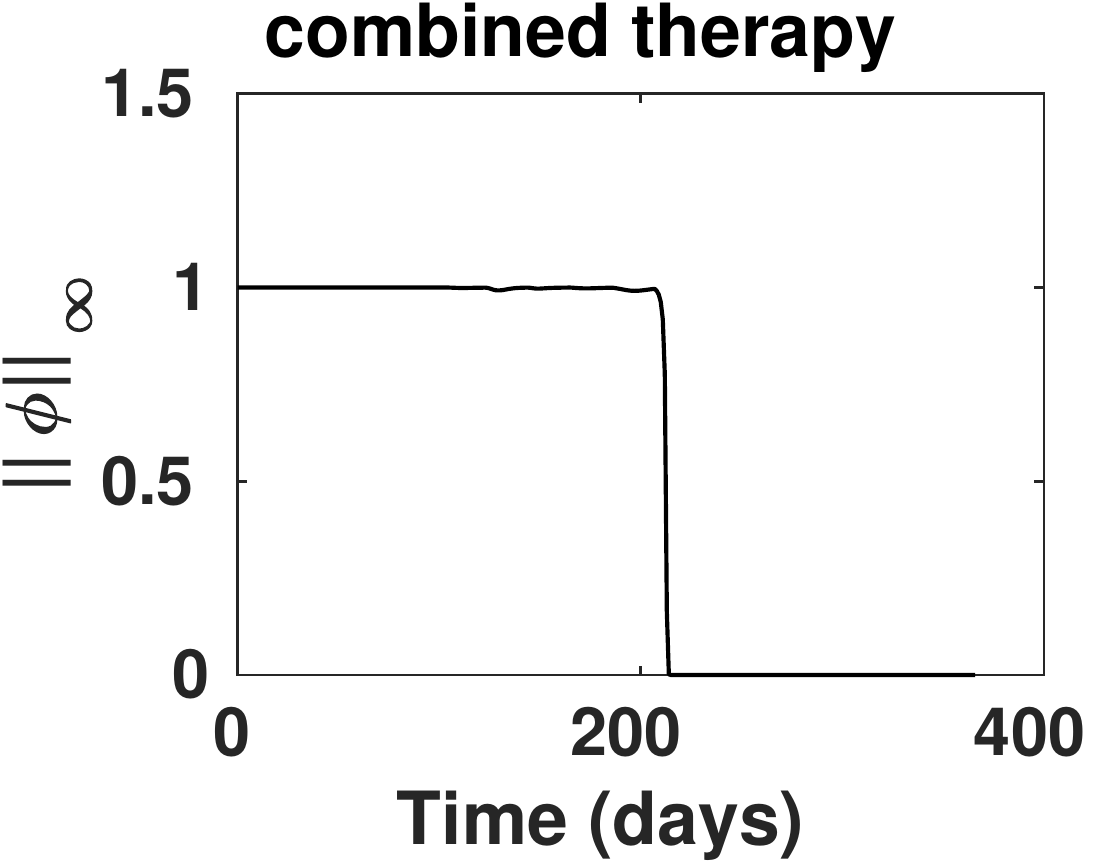}
    \caption{Evaluation of MBP preservation for an aggressive tumor under treatment plans (I–III), using baseline parameters \(S_c = 2.75\), \(\gamma_c = 17\).}
    \label{fig:epsimage}
\end{figure}

\section{Concluding Remarks}  \label{sec:conclusion}

This paper develops a family of matrix-free stabilized BDF schemes for semilinear parabolic equations that satisfy maximum bound principle and an energy dissipation law. We provide a unified theoretical analysis of the proposed schemes. The accuracy of the sBDF$k$ discretizations is established, and unconditional contractivity of the associated fixed-point mappings is proved. For the Allen–Cahn equation with the double-well potential, we further derive an unconditional discrete energy stability result. The resulting algorithms are implemented through matrix-free fixed-point iterations, thereby avoiding the assembly and inversion of large matrices and improving computational efficiency. Numerical experiments validate the theoretical findings and demonstrate both the accuracy and the efficiency of the schemes. 

The present study employs a second-order accurate finite-difference discretization in space. A natural direction for future work is to incorporate sBDF$k$ methods with high-order spatial discretizations that retain the unconditional MBP-preserving and discrete energy stability properties, along with efficient matrix-free implementation.

\section*{Acknowledgements}
The work is supported in part by the National Science Foundation under Grant DMS-2143739 and the ACCESS program through allocation MTH210005.

\bibliographystyle{plain}
\bibliography{ref2}
\end{document}